\documentclass[12pt,oneside]{amsart} 

\usepackage{amssymb,amscd}
\usepackage{euscript}
\usepackage{hyperref}
\usepackage{color}

     \theoremstyle{plain}
      \newtheorem{theorem}{Theorem}[section]
      \newtheorem{lemma}[theorem]{Lemma}
      \newtheorem{corollary}[theorem]{Corollary}
      \newtheorem{proposition}[theorem]{Proposition}
      \newtheorem{remark}[theorem]{Remark}
      
      \newtheorem{definition}[theorem]{Definition}      
       
      \newtheorem{example}[theorem]{Example}

       \numberwithin{equation}{section}

      \makeatletter
      \def\@setcopyright{}
      \def\serieslogo@{}
      \makeatother

\def\M{X}

\def\N{\mathbb{N}}
\def\R{\mathbb R}
\def\rd{{\mathbb R ^d}}
\def\Z{\mathbb Z}
\def\T{\mathbb T}  
\def\Q{\mathbb Q}  

\def\tE{\tilde {E}}
\def\E{\mathcal{E}}
\def\TE{\tilde {\mathcal{E}}}
\def\w{\mathcal{W}}

\def\H{\mathcal H}
\def\m{\mathcal{M}}

\def\A{\EuScript{A}} 
\def\B{\EuScript{B}}
\def\tA{\tilde {\EuScript{A}}} 
\def\tB{\tilde {B}}

\def\tC{\tilde {C}}

\def\dist{\text{dist}}

\def\Id{\text{Id}}
\def\e{\varepsilon}
\def\la{\lambda}

\def\V{{\mathcal{V}}}

\def \a{\alpha}

\def\b{\beta}

\def\h{\varphi}
\def\h{\varphi}

\def\h{{\varphi}}

\def\H{{\mathcal {H}}}

\def\t{\mathcal T}

\def\QED{\hfill\hfill{\square}}

\begin{document}

\date{\today}
\author{Boris Kalinin$^1$ and Victoria Sadovskaya$^2$}

\address{Department of Mathematics, The Pennsylvania State University, University Park, PA 16802, USA.}
\email{kalinin@psu.edu, sadovskaya@psu.edu}

\title [Periodic data rigidity for cocycles and hyperbolic automorphisms]
{Periodic data rigidity for cocycles \\and hyperbolic automorphisms} 

\thanks{{\em Key words:} Linear cocycle, hyperbolic systems, conjugacy,  periodic data, rigidity}

\thanks{$^1$  Supported in part by Simons Foundation grant 855238}
\thanks{$^2$ Supported in part by  Simons Foundation grant MP-TSM-00002874}


\begin{abstract}

We study cohomology of H\"older continuous linear cocycles over a hyperbolic dynamical system
 and regularity of conjugacy between Anosov systems.
  
 For cocycles $\A$ and $\B$ with conjugate periodic data, we establish H\"older cohomology under various conditions: the periodic data of $\B$ has narrow spectrum and the periodic data conjugacy $C(p)$ is H\"older continuous at a periodic point;  $\B$ is constant and the cocycles are measurably cohomologous; $\B$ is constant and diagonalizable over $\mathbb C$ and either its Lyapunov spaces are at most two-dimensional or $C(p)$ is in a bounded set. 
 
 We also prove that a topological conjugacy between a weakly irreducible hyperbolic automorphism $L$ and an Anosov diffeomorphism $f$ of $\T^d$ is smooth if  their derivative cocycles $L$ and $Df$ are conjugate. 
  Using this and our results on cohomology of cocycles  we obtain global periodic data rigidity results for weakly irreducible hyperbolic automorphisms. In the argument we also establish differentiability of stable holonomies in low regularity setting.
  
\end{abstract}

\maketitle 



\section{Introduction and results}

\subsection{Introduction} 
Cohomology of H\"older continuous cocycles over hyperbolic dynamical systems has been extensively studied. One of the main problems in this area is establishing cohomology based 
on the periodic data of the cocycles.
It was solved by Liv\v{s}ic in \cite{Liv1,Liv2} for scalar cocycles and, more
generally, for  cocycles with values in abelian groups. In the non-commutative setting, matrix-valued and linear cocycles are the primary objects of study, motivated by derivative cocycles of smooth systems. 
Cocycles play an important role in smooth dynamics and rigidity of Anosov systems. We formulate our results for linear cocycles in Section \ref{cocycles sec} and give applications to rigidity of hyperbolic automorphisms in Section \ref{rigidity}.
\vskip.05cm

Let $f: \M \to \M$ be a homeomorphism of a compact metric space and let $P : \E \to \M $ be a finite dimensional H\"older continuous vector bundle over $\M$. A {\em linear cocycle}  $\A$ over $f$ 
is an  automorphism of $\E$ that projects to $f$, that is, a homeomorphism of $\E$ such that
$$P \circ \A = f \circ P \quad\text{and $\;\A_x : \E_x \to \E_{fx}$\,
is a linear isomorphism for each $x\in \M$.}
$$
If $f$ is a diffeomorphism of a manifold $X$, then the differential $Df$ is a linear cocycle on the tangent bundle $T\M$.  
Another important class of examples is given by random and Markovian sequences of matrices. They correspond to locally constant cocycles on a trivial bundle over a full shift or a subshift of finite type.

In the case of a trivial bundle $\E=\M \times \rd$, the cocycle is defined by a function 
$\A: \M \to GL(d,\R)$ with $\A(x)=\A_x$, and it is called a $GL(d,\R)$  cocycle.
We use the following metric on $GL(d,\R)$
$$
d (A, B) = \| A  - B \|  + \| A^{-1}  - B^{-1} \|, \quad\text{where $\|\,.\,\|$ is the operator norm.} 
$$

A cocycle $\A$  is called $\beta$-H\"older if  $\A_x$ depends $\beta$-H\"older continuously on $x$. A detailed description of this notion in the bundle setting is given in Section 2.2 of \cite{KS13}.
\vskip.05cm 

Two $GL(d,\R)$ cocycles $\A$ and $\B$ over $f$ are {\it cohomologous} if there exists a function $C:\M\to GL(d,\R)$ such that
$$
  \A_x=C(fx) \circ \B_x \circ C(x)^{-1} \quad\text{for all }x\in \M.
$$
Such a function $C$ is called a  {\em conjugacy} or {\em transfer map} between $\A$ and $\B$.
For linear cocycles $\A:\E \to \E$ and $\B:E\to E$ over $f$, a conjugacy is defined similarly 
with $C(x) \in GL(E_x,\E_x)$. In this case $C:E\to \E$ is a bundle isomorphism.


\subsection{Continuous conjugacy from the periodic data} \label{cocycles sec}$\;$ 
\vskip.1cm
\noindent{\bf Assumptions.} In this section, $f:X \to X$ is a topologically transitive Anosov diffeomorphism, or a topologically mixing diffeomorphisms of a locally maximal hyperbolic set, or a mixing subshift of finite type (see Section \ref{base} for details).  All cocycles over $f$ are assumed to be H\"older continuous.
\vskip.1cm

We denote  the $n$th iterate of $\A$ by $\A^n$, so that   $\,\A^0=\Id\,$ and for  $x\in X$  and $n\in \N$,
$$
\A_x^n = \A_{f^{n-1} x}\circ \cdots \circ \A_x \;\text{ and }\;
\A_x^{-n}= (\A_{f^{-n} x}^n)^{-1}.
$$

We say that $GL(d,\R)$ cocycles $\A$ and $\B$ over $f$ have {\it conjugate periodic data} if for each periodic point $p=f^n p$ there is a matrix $C(p)\in GL(d,\R)$
 such that 
 $$ \A_p ^{n}=C(p)\circ \B_p ^{n} \circ C(p)^{-1}.$$
  For linear cocycles this means existence of such an operator $C(p) \in GL(E_p,\E_p)$.

 Clearly, continuous conjugacy between $\A$ and $\B$ implies conjugacy of their periodic data, and a natural question is whether the converse is true. This problem is difficult and far from fully solved. The following example shows that the answer may be negative  even in dimension two with a constant cocycle $\B$ and a bounded set $\{ C(p)\}$.  

\begin{example}\cite{S13} \label{periodic conj} 
There  exist a constant cocycle $\B_x=$ {\footnotesize $\left[ \begin{array}{cc} 1 &  \beta \\ 0& 1 \end{array} \right] \,$} and a smooth cocycle 
$\A_x=$ {\footnotesize $\left[ \begin{array}{cc} 1 &  \a(x) \\ 0& 1 \end{array} \right]$\,}  
with $\A_x$ and $B_x$
arbitrarily  close to the identity, such that  $\A$ and $\B$ have  conjugate periodic data with $\{C(p)\}$ bounded in $GL(2,\R)$, but are not measurably cohomologous. 
\end{example}

The case of $ \A_p ^{n}= \B_p ^{n}$ is simpler and  better understood. Positive answers were given
by Parry and Pollicott  \cite{PaP,Pa} for cocycles with vales in compact groups and by Schmidt \cite{Sch}  for cocycles  with ``bounded distortion".
For general fiber bunched linear cocycles with equal periodic data the problem was solved in \cite{S15,Ba}. 
Positive results for conjugate periodic data 
were obtained in \cite{Pa,Sch} for cohomology in compact
groups under extra assumption of transitivity of the cocycle extension on the skew-product. 

In light of Example \ref{periodic conj}, for linear cocycles one needs some regularity assumption on $C(p)$. 
We say that $C(p)$ is $\beta$-H\"older continuous at a periodic point $q$ if
\begin{equation}\label{Holder at q}
d(C(p),C(q))\le c\, \dist(p,q)^\beta\; \text{ for every periodic point $p$ close to $q$.}
\end{equation}
Under this assumption, H\"older cohomology between linear cocycles $\A$ and $\B$, where $\B$  is fiber-bunched (see Definition \ref{bunching def}), was obtained in \cite{S15,S17}. Further, H\"older cohomology was established for a  constant  $GL(d,\R)$-valued cocycle $\B$ and  a cocycle $\A$ sufficiently close to $\B$ with periodic data conjugacy satisfying \eqref{Holder at q}.
The following theorem extends this result to the global setting, and to $\B$ with $\delta$-narrow periodic data, as defined in \cite{DG24}.

\begin{definition}\label{delta narrow}
A cocycle $\B$ has {\em $\delta$-narrow periodic data} centered at $\la_1  \ge \la_2\ge \dots \ge  \la_d$ 
if for each periodic point $p=f^np$ the eigenvalues $\a_i$ of $\B_p^n$ can be ordered so that
\vskip.1cm \hskip3cm 
$e^{n(\la_i-\delta)} \le |\a_i| \le e^{n(\la_i+\delta)} \quad \text{ for each $i=1, \dots, d$}.$
\end{definition}

Clearly, this property holds for a constant cocycle with $\delta=0$, and for a cocycle close to constant with  a small  $\delta>0$.

\begin{theorem}   \label{Holder C(p)}
Let $\A$ and $\B$ be H\"older continuous linear cocycles over $f$ on bundles $\E$ and $E$. 
Suppose that $\A$ and $\B$ have conjugate periodic data such that  $C(p)$ can be chosen 
H\"older continuous at some periodic point $q$. Suppose also that $\B$ has
$\delta$-narrow periodic data with a sufficiently small $\delta$.
Then there exists a unique H\"older continuous conjugacy $\bar C$ between $\A$ and $\B$ with $\bar C(q)=C(q)$.
\end{theorem}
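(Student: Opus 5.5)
The plan is to define $\bar C$ along a dense orbit by transporting $C(q)$, and to control the resulting maps using the narrow spectrum of $\B$ together with the H\"older continuity of $C(p)$ at $q$.

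First, we reduce to a neighborhood of $q$. Since $f$ is transitive, pick a point $z$ whose orbit is dense and which is close to $q$. Define
$$\bar C(f^n z) = \A^n_z \circ C_0 \circ (\B^n_z)^{-1},$$
where $C_0$ is to be determined. To see that this is uniformly H\"older on the orbit, we use the Anosov closing lemma. If $f^n z$ and $f^m z$ are close, closing the orbit segment between them gives a periodic point $p$. Its returns pass near $q$ if we arrange the segments to start near $q$. Specifically, we take $z$ in the local stable or unstable manifold of $q$ and use the periodic points $p$ shadowing the concatenation $q \to f^n z \to q$.

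The key identity is
$$\A^N_p = C(p)\,\B^N_p\, C(p)^{-1}.$$
Combined with the Hölder continuity $d(C(p),C(q)) \le c\,\dist(p,q)^\beta$, it lets us compare $\A^n_z C(q) (\B^n_z)^{-1}$ along nearby orbits.

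The main technical step is to show that the ``twisted'' cocycle
$$\tilde\A_x^n = (\B^n_x)^{-1}$$
conjugated by the candidate transfer maps has slow growth: the conjugacies $D_n = (\A^n_z)^{-1}\, \bar C(f^n z)\, \B^n_z$ should stay bounded. Here the narrow periodic data is essential. Because $\B$ has $\delta$-narrow periodic data, the Lyapunov exponents of all ergodic measures of $\B$ lie within $\delta$ of $\la_1,\dots,\la_d$, by Kalinin's approximation of exponents by periodic data. This gives the quasi-conformality estimate
$$\|\B^n_x\|\,\|(\B^n_x)^{-1}\|\cdot e^{-n(\la_1-\la_d)} \le C_\e e^{\e n}$$
only blockwise. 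So we would first split $E$ into the continuous $\B$-invariant dominated splitting according to the clusters of the $\la_i$, with each block fiber bunched up to $2\delta$.

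Transferring the periodic data conjugacy then shows that $\A$ has the same spectral clusters. Each periodic $\A_p^N$ is conjugate to $\B_p^N$ via $C(p)$, with $C(p)$ bounded near $q$, so $\A$ inherits the corresponding dominated splitting. The conjugacy $C(p)$ must map the $\B$-blocks to the $\A$-blocks at $p$, since those blocks are the generalized eigenspaces grouped by modulus.

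Within each block, both cocycles are fiber bunched when $\delta$ is small relative to the Hölder exponents and the hyperbolicity of $f$. On each block we can therefore apply the fiber bunched result of \cite{S15,S17}, using holonomies to show that $\bar C$ is Hölder on the dense orbit and extends continuously. Invariance of $\bar C$ then follows by continuity, and uniqueness follows from the fact that a conjugacy commuting with holonomies is determined by its value at one point.

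The main obstacle is the off-diagonal coupling between blocks. The conjugacy need not be block-diagonal, because $\A$ is not assumed to be bunched globally. I would handle this by exploiting dominated splitting: the off-diagonal components of $\bar C$ satisfy twisted cohomological equations that are contracting along stable or unstable directions because of the gap $\la_i-\la_j$. This gives Hölder solutions that are uniquely determined.

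Making this compatible with the periodic conjugacies $C(p)$, which may carry Jordan-block nonuniqueness, requires using the continuity at $q$ to pin down the choice. This is where smallness of $\delta$ relative to $\beta$ enters.
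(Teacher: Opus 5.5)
Your final outline is the paper's proof: split into clusters, note that $C(p)$ carries the $\B$-blocks at $p$ to the $\A$-blocks (generalized eigenspaces grouped by modulus), use fiber bunching of the blocks, apply \cite[Theorem 1.9]{S17} blockwise, and take the direct sum. But there is a genuine gap in the step everything rests on: the existence of continuous dominated splittings $E=E^1\oplus\dots\oplus E^\ell$ for $\B$ and $\E=\E^1\oplus\dots\oplus\E^\ell$ for $\A$.

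For $\B$ you only invoke periodic approximation of Lyapunov exponents. That shows the exponents of every ergodic measure cluster near the $\la_i$. However, Oseledets splittings are measurable and defined one measure at a time, so clustered exponents do not by themselves give a continuous invariant splitting, let alone a uniformly dominated one. Your blockwise quasi-conformality estimate already presupposes the blocks. The missing ingredient is the DeWitt--Gogolev theorem (Theorem \ref{DG}): for $\delta$ small relative to the H\"older exponent, $\delta$-narrow periodic data forces a dominated splitting at every gap $\sigma_i<\sigma_{i+1}$. This is the new input that lets one pass from fiber bunched or near-constant $\B$ to narrow data, and it is where ``sufficiently small $\delta$'' first enters.

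For $\A$, your reason (that $C(p)$ is bounded near $q$) gives nothing globally, since $C(p)$ is controlled only near $q$. The correct observation is that conjugate periodic data have the same eigenvalues, so $\A$ itself has $\delta$-narrow periodic data and Theorem \ref{DG} applies to it directly. Then \cite[Theorem 1.3]{K11} gives $c^{-1}e^{n(\sigma_i-\delta')}\|u\|\le\|\A_i^n u\|\le c\,e^{n(\sigma_i+\delta')}\|u\|$, and likewise for $\B_i$. This makes all blocks $\beta$ fiber bunched for $\beta=\min\{\beta'',\hat\beta\}$, where $\hat\beta$ is the H\"older exponent of the splittings.

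Also, the ``main obstacle'' you identify does not exist, and your claim that the conjugacy ``need not be block-diagonal'' is false. An off-diagonal component $D$ of a continuous conjugacy satisfies $\A_i(x)D(x)=D(fx)\B_j(x)$ with $\sigma_i\ne\sigma_j$. Iterating forward if $\sigma_i>\sigma_j$, or backward if $\sigma_i<\sigma_j$, and using the growth estimates above shows that the only bounded solution is $D=0$. So every continuous conjugacy is automatically block-diagonal, and $\bar C=\oplus\bar C_i$ is the required conjugacy. Uniqueness follows from the uniqueness of each $\bar C_i$ with $\bar C_i(q)=C_i(q)$ in \cite{S17}. Your opening dense-orbit construction, with the ``twisted cocycle'' $(\B^n_x)^{-1}$, is not a coherent argument as written, and the blockwise application of \cite{S17} makes it unnecessary.
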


\begin{remark}The H\"older exponent of $\bar C$ may be smaller than the exponents  of $\A$ and $\B$, and it is discussed in the proof. The same holds in Theorems \ref{+meas} and \ref{cocycle theorem} below.
\end{remark}

 We note that  $\bar C(p)$ does  not necessarily coincide with 
$C(p)$ for $p\ne q$. For example, let $\B_x=\Id$ and
let $\A_x=\bar C(fx) \circ \bar C(x)^{-1}$,
where $\bar C:\M\to GL(d,\R)$ is any H\"older continuous function
with $\bar C (q)=\Id$.
Then $\A^n_p=\B^n_p=\Id\,$ whenever $p=f^np$, and thus we can take
$C(p)=\Id$ for each $p$.

\vskip.1cm

In the next theorem for a constant cocycle $\B$ we replace H\"older continuity of $C(p)$ at $q$ by existence of a measurable conjugacy between the cocycles. We note that there are results on continuity of a measurable conjugacy, but  not  in this setting.

\begin{theorem}   \label{+meas}
Let $\B=B$ be a constant $GL(d,\R)$ cocycle and let $\A$ be a H\"older  linear cocycle over $f$ with conjugate periodic data. Suppose there exists a  conjugacy $C$ between $\A$ and $\B$ 
which is measurable with respect to an ergodic $f$-invariant measure on $X$ with full support and local product structure. Then $C$ coincides on a set of full measure with  a H\"older continuous conjugacy between $\A$ and $\B$.
\end{theorem}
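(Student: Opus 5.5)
The plan is to reduce Theorem \ref{+meas} to Theorem \ref{Holder C(p)}. A constant cocycle $\B=B$ has $\delta$-narrow periodic data with $\delta=0$, centered at the logarithms of the moduli of the eigenvalues of $B$. So it will suffice to produce a conjugacy of periodic data that is H\"older continuous at some periodic point $q$. We will then compare the resulting H\"older conjugacy $\bar C$ with the given measurable $C$.

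\textbf{Step 1: a good point.} By Lusin's theorem we take a compact set $K$ of measure close to $1$ on which $C$ is uniformly continuous. Here we would really like H\"older continuity along stable and unstable leaves, as in the approach below. The main tool is the structure of a constant cocycle. Split $\R^d=\oplus_i V_i$ into generalized eigenspaces grouped by modulus $e^{\la_i}$. The relation $C(f^nx)=\A^n_x\, C(x)\, B^{-n}$ transports $C$ along orbits, and the subspaces $\bar{\mathcal V}_i(x)=C(x)V_i$ give a measurable $\A$-invariant splitting with Lyapunov exponents $\la_i$ and subexponential distortion. The natural way to get regularity is through holonomies. For two points $x,y$ on the same stable leaf, both returning to $K$ infinitely often, we would try to show that
\[
C(y)=\lim_{n\to\infty} (\A^n_y)^{-1} C(f^n y)\, B^n\, C(f^n x)^{-1}\A^n_x\, C(x)\cdots
\]
This naive formula fails without bunching. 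So instead we work component by component: on each $V_i$ the cocycle $B$ is conformal up to polynomial factors, and the corresponding restriction of $\A$ to $\bar{\mathcal V}_i$ is then ``fiber bunched along $K$''.

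\textbf{Step 2: the alternative route (preferred).} Rather than proving continuity of $C$ directly, we will use the measurable conjugacy only to obtain a single good periodic point. We choose a density point $q$ of $K$ that is periodic. Such a point exists because of local product structure together with the Anosov closing lemma: periodic points shadowing orbits that return to $K$ can be taken to have their relevant stable and unstable points in $K$. The goal is then to redefine the periodic data conjugacy $C(p)$ for $p$ near $q$ so that it satisfies \eqref{Holder at q}. For a periodic $p$ close to $q$ we use the closing lemma construction through the heteroclinic point $z=W^s_{loc}(q)\cap W^u_{loc}(p)$. The candidate is $C(p)$ defined via $C(q)$ transported along the stable and unstable connections, using the $\delta$-narrow structure exactly as in the proof of Theorem \ref{Holder C(p)}. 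The measurable conjugacy is needed to guarantee that this transported matrix actually conjugates $\A^n_p$ to $B^n$; without it, Example \ref{periodic conj} shows the argument must fail. The verification goes through the positive-measure set of points in $K$ near $z$, where $C$ is continuous and intertwines the cocycles.

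\textbf{Step 3: conclusion.} Theorem \ref{Holder C(p)} now gives a H\"older conjugacy $\bar C$ with $\bar C(q)=C(q)$. Then $D=\bar C^{-1}C$ is a measurable map satisfying $D(fx)=B\,D(x)\,B^{-1}$, so $D$ is a measurable invariant of the constant cocycle acting on $GL(d,\R)$ by conjugation. Ergodicity and Poincar\'e recurrence imply that $D$ takes values in the set of $A$ with bounded orbit $B^nAB^{-n}$ on a full-measure set. A further argument using ergodicity with respect to the isometric part of $B$ on the commutant shows that $\bar C D_0$ is a H\"older conjugacy equal to $C$ a.e. One also has to handle the case where $D$ is nonconstant but still $B$-equivariant, for instance through rotation parts of $B$. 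This is where the distinct approach via direct Lusin continuity along $W^s$ and $W^u$ may be needed.

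\textbf{Main obstacle.} The main obstacle is Step 2, verifying the regularity at $q$ without any bunching. I would first try to prove directly that $C$ restricted to $K$ is H\"older along stable leaves, using the $\la_i$-block decomposition and the fact that within each block $B$ has zero relative spread.
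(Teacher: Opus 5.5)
\textbf{Verdict: genuine gap.} Step 2, which carries all the weight, is not established. The surrounding framework is also misdirected, and the essential ingredients are missing.

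\textbf{The bunching obstacle you name is not real.} Because the periodic data of $\A$ is conjugate to that of the constant $B$, it is $0$-narrow. Theorem \ref{DG} then gives a H\"older dominated splitting $\E=\E^1\oplus\dots\oplus\E^\ell$ for $\A$ matching the Lyapunov splitting $E^1\oplus\dots\oplus E^\ell$ of $B$. By Kalinin's periodic approximation results, each $\A_i=\A|\E^i$ has the single exponent $\la_i$ for every invariant measure, with uniformly subexponential distortion. So each $\A_i$ is uniformly fiber bunched, not merely ``fiber bunched along $K$'', and it has H\"older stable and unstable holonomies.

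\textbf{The measurable splitting is the continuous one.} A measurable conjugacy preserves Lyapunov exponents of vectors. Hence your measurable splitting $C(x)V_i$ coincides $\mu$-a.e. with the continuous $\E^i_x$, and $C$ splits into measurable conjugacies $C_i$ between $B_i$ and $\A_i$. At this point the paper applies the measurable-to-continuous rigidity theorem [KSW23, Theorem 2.1] to each block. Its hypotheses are: $B_i$ fiber bunched, $\A_i$ with one exponent, and $\mu$ with local product structure. Taking the direct sum finishes the proof. No periodic point, no appeal to Theorem \ref{Holder C(p)}, and no analysis of self-conjugacies $D$ of $B$ is needed.

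\textbf{Step 2 cannot work as set up.} Periodic points form a countable set. This set is $\mu$-null, since an ergodic measure with an atom lives on a single periodic orbit, which is incompatible with full support. So choosing $q$ as a ``periodic density point of $K$'' gives no control over $C(q)$, or over $C$ at periodic $p$ near $q$; the values of a measurable $C$ there are arbitrary modulo null sets.

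\textbf{The key claim is asserted, not proved.} You claim the matrix obtained by transporting $C(q)$ along the heteroclinic connection through $z$ conjugates $\A^n_p$ to $B^n$, ``verified through points of $K$ near $z$''. This amounts to knowing that $C$ intertwines the stable and unstable holonomies of $\A_i$ with the trivial holonomies of $B_i$ on a set of full measure. That is exactly the heart of the measurable rigidity theorem, and you give no argument for it. It requires Lusin-set recurrence along stable and unstable leaves, using the bunching above, and then the local product structure of $\mu$ to pass from a.e.\ leaves to a.e.\ pairs of points.

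\textbf{The detour is unnecessary.} Once that intertwining is proved, local product structure directly shows that $C$ agrees $\mu$-a.e.\ with a H\"older function. The detour through a periodic-data conjugacy H\"older at $q$, and your admittedly unfinished Step 3, then become superfluous.
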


In Theorem \ref{+meas}  and Theorem \ref{cocycle theorem} below the linear cocycle $\A$ is on a vector bundle $\E$, and conjugacy of the cocycles yields that $\E$ is trivial.

In the next theorem  we consider a constant cocycle $\B$  diagonalizable over $\mathbb C$. Compared to Theorem \ref{Holder C(p)}, we remove the continuity  assumption on $C(p)$ if $\B$ has at most two-dimensional Lyapunov spaces and weaken it to boundedness in the general case. 

\begin{theorem}   \label{cocycle theorem}
Let $\B=B$ be a diagonalizable over $\mathbb C$ constant $GL(d,\R)$ cocycle over $f$ and let $\A$ be a H\"older linear cocycle over $f$ with conjugate periodic data.
Then $\A$ is H\"older conjugate to $\B$ if either of the following conditions holds
\vskip.05cm
\begin{itemize}
\item[{\bf (i)}] Lyapunov spaces of $B$ are at most two-dimensional, that is,  no three  eigenvalues of $B$ (counted with multiplicity)  have the same modulus; 
\vskip.1cm
\item[{\bf (ii)}]  The periodic data conjugacy $C(p)$ can be chosen in a bounded subset of $GL(d,\R)$. 
\end{itemize}
\vskip.1cm
\noindent 
\end{theorem}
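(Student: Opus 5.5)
The plan is to reduce both cases to Theorem \ref{Holder C(p)}. A constant cocycle $B$ has $0$-narrow periodic data, so it is enough to show that the periodic data conjugacy can be re-chosen, keeping the relation $\A_p^n=C(p)B^nC(p)^{-1}$, so that it is H\"older continuous at some periodic point $q$. The freedom we have is that $C(p)$ may be replaced by $C(p)Z$ for any $Z$ in the centralizer of $B^n$, where $n$ is the period of $p$. Since $B$ is diagonalizable over $\mathbb C$, we split $\R^d=\oplus_i E_i$ into the real Lyapunov spaces of $B$, one for each modulus $e^{\la_i}$. On each $E_i$, $B$ equals $e^{\la_i}$ times a matrix conjugate to an orthogonal one.

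First I would construct a continuous invariant splitting for $\A$. From the conjugate periodic data, the Lyapunov exponents of $\A$ at every periodic point equal those of $B$. By a Kalinin-type approximation argument, every ergodic measure then has the exponents $\la_i$, with the same multiplicities. This should give a dominated (H\"older, $\A$-invariant) splitting $\E=\oplus \E^i$ with $\dim \E^i=\dim E_i$, and with $C(p)E_i=\E^i_p$ at periodic points. Each restriction $\A|_{\E^i}$ then has all exponents equal to $\la_i$ and periodic data conjugate to $e^{\la_i}$ times an orthogonal matrix. After normalizing by $e^{-\la_i}$, the periodic data are elliptic.

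Case (ii). Boundedness of $C(p)$ gives $\|\A^n_p|_{\E^i}\|\le K e^{n\la_i}$ uniformly. I would upgrade this to uniform quasiconformality, or uniform boundedness, of the normalized $\A|_{\E^i}$ over all orbits, using a Kalinin--Sadovskaya result on cocycles with bounded periodic data. The output would be a H\"older continuous invariant conformal structure. In the adapted norms each normalized block is then isometric, hence fiber bunched. The whole of $\A$ is still not fiber bunched, but it is block-diagonal with isometric blocks, so holonomies exist blockwise.

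Given these holonomies, I would follow the proof of Theorem \ref{Holder C(p)} via Anosov closing and the holonomies. Pick a periodic point $q$. For periodic $p$ near $q$, pass through a heteroclinic point $z$ and compare $C(p)$ with the image of $C(q)$ under the holonomies. Using boundedness and a compactness/limit argument in the centralizer, we can correct $C(p)$ within $C(p)\cdot\mathrm{Cent}(B^n)$ so that it is H\"older close to $C(q)$. Theorem \ref{Holder C(p)} then applies.

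Case (i). Each $E_i$ has dimension $1$ or $2$. In dimension $1$ the block is scalar, and Liv\v{s}ic's theorem gives the conjugacy after a sign/orientation check. In dimension $2$ the block is $e^{\la_i}$ times a rotation. I would first show that the normalized $\A|_{\E^i}$ is uniformly quasiconformal, i.e. that a bounded choice of $C(p)$ is automatic. In dimension two, the conformal distortion at periodic points is controlled: a matrix conjugate to a rotation acting on the hyperbolic plane of conformal structures is elliptic. The danger is unbounded growth of its fixed point. Example \ref{periodic conj} shows this must use diagonalizability, since parabolic blocks fail. Once uniform quasiconformality is obtained, we are back in case (ii).

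The step I expect to be the main obstacle is obtaining uniform quasiconformality (bounded distortion) of each normalized block from information at periodic points only. This is the step needed to get from (i) to (ii), and also the step needed in (ii) to pass from periodic points to all points. For this I would try the argument via a fixed point of the action on the symmetric space $SL(2,\R)/SO(2)$ combined with the zero-exponent results of Kalinin--Sadovskaya.
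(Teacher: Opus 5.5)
\textbf{There is a genuine gap.} It sits in the step you treat as routine: re-choosing $C(p)$ so that it is H\"older at $q$. A global H\"older conjugacy $\bar C$ with $\bar C(q)=C(q)$ would itself give such a choice. So your ``reduction'' to Theorem \ref{Holder C(p)} only relocates the whole difficulty, and ``compactness/limit argument in the centralizer'' names no mechanism.

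To see what is actually needed, normalize so that $q$ is fixed, $\tA_q=\tB$ and $\tC(q)=\Id$. For $p=f^np$ near $q$ let $z=W^s_{loc}(q)\cap W^u_{loc}(p)$. The holonomy candidate $D(p)=H^u_{z,p}H^s_{q,z}$ is H\"older close to $\Id$, but equivariance gives
\[
D(p)^{-1}\,\tA^n_p\,D(p)=H^s_{z,q}\,H^u_{f^nz,\,z}\,H^s_{q,\,f^nz}\;\tB^n .
\]
So $D(p)$ is an admissible choice only if this loop holonomy is the identity. Its unstable leg from $f^nz$ to $z$ is long, and its triviality is exactly what must be proved.

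Boundedness of $\{\tC(p)\}$ cannot substitute for this at a general period $n$. The bounded conjugates of the isometry $\tB^n$ form a compact set of positive dimension. For a rotation $R_\a$ of $\R^2$ this is a compact piece of $\{M:\det M=1,\ \mathrm{tr}\,M=2\cos\a\}$. Hence $\tA^n_p=\tC(p)\tB^n\tC(p)^{-1}$ with $\tC(p)$ bounded says nothing about $\tA^n_p$ being near $\tB^n$. Even nearness would not suffice: $R_\a$ and $R_{-\a}$ are close for small $\a$ and conjugate by a reflection, but by no map near $\Id$.

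\textbf{The missing idea} is to compare only at times when $\tB$ returns near the identity, whose conjugacy class is a single point. With $q$ fixed and $\tA_q=\tB$, the maps $C^s(x)=H^s_{q,x}$ on $W^s(q)$ and $C^u(x)=H^u_{q,x}$ on $W^u(q)$ are conjugacies by equivariance. One must show they agree at homoclinic points $x$.
\begin{itemize}
\item By compactness of the orthogonal group, choose $n_k$ with $\tB^{n_k}\to\Id$. Then $H^s_{x,q}H^u_{q,x}=\lim_k\tA^{2n_k}_{f^{-n_k}x}$.
\item Close the segment from $f^{-n_k}x$ to $f^{n_k}x$ to a point $p_k=f^{2n_k}p_k$. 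Since $\tA$ is isometric, $\|\tA^{2n_k}_{f^{-n_k}x}-\tA^{2n_k}_{p_k}\|\to0$.
\item Meanwhile $\tA^{2n_k}_{p_k}=\tC(p_k)\tB^{2n_k}\tC(p_k)^{-1}\to\Id$, and this is exactly where boundedness of $\{\tC(p)\}$ is used.
\end{itemize}
Thus $C^s=C^u$ on the dense homoclinic set. The resulting conjugacy is H\"older there directly from the holonomy estimates, without invoking Theorem \ref{Holder C(p)}. The fixed-point assumption is then removed by running the argument for $f^N$ and $f^K$ at periodic points of periods $N$ and $K$, with $K$ coprime to $NM$. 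Here $Z(\tB^{NMT})=Z(\tB^{NM})$ for all $T$ by \cite[Proposition 4.8]{S15}.

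\textbf{Smaller points.} The step you flagged as the main obstacle is available off the shelf. \cite[Theorems 1.1 and 1.3]{KS10} give a H\"older conformal structure on each block in cases (ii) and (i) respectively. In case (i), boundedness of $\tC(p)$ then follows: an isometry of the resulting inner product at $p$ with the same eigenvalues as the orthogonal $\tB^n$ is conjugate to it by a map of uniformly bounded distortion. Finally, the splitting $\E=\oplus\E^i$ should come from Theorem \ref{DG}; equality of exponents for all invariant measures does not by itself give domination.
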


This result is optimal for cohomology to a constant cocycle. 
Example \ref{periodic conj} shows that  the theorem does not hold without the diagonalizability assumption.
Also, in dimension at least 3 the boundedness assumption 
on $C(p)$ cannot be dropped, as the following example by de la Llave  in the appendix to \cite{GKS11} shows. 

\begin{example}\label{3d example}
Let $f$ be an Anosov diffeomorphism  of a manifold $\M$. There
exists a family of $SL(3, \R)$-valued cocycles
$\A_\epsilon$, $|\epsilon | < 1$, over $f$ such that:
\begin{itemize}
\item $\A_\epsilon(x)$ is jointly analytic in $\epsilon$ and $x$;

\item $\A_0=B$ is a  constant orthogonal matrix;

\item For any $\epsilon$ and any $p=f^np$, the matrix $\A_\epsilon^n (p)$ 
conjugate to $\A_0^n (p)=B^n$; 
\item For any $\epsilon \ne 0$, the cocycle $\A_\epsilon$ is not uniformly quasi-conformal,\\
and so  $\A_\epsilon$ cannot be continuously 
conjugate to $\A_0$.
\end{itemize}
\end{example}

In this paper we use a recent result by DeWitt and Gogolev on existence of dominated splitting
from periodic data \cite{DG24}. 
Combing it with results and techniques developed in \cite{S15,S17,KSW23}
we obtain Theorems \ref{Holder C(p)} and \ref{+meas}. Theorem \ref{cocycle theorem} 
uses a new approach  for removing/weakening continuity assumption on $C(p)$.
The argument draws on results and ideas from \cite{KS10, KS13,S15}.

\vskip.3cm


\subsection{Applications to rigidity problems for hyperbolic systems} \label{rigidity}$\;$
\vskip.1cm
\noindent 
We recall that a toral automorphism $L$ is {\it hyperbolic} if it has no eigenvalues of modulus 1. 
By the classical results of Franks and Manning \cite{F69,M73}, any Anosov diffeomorphism $f$ of $\T^d$ is topologically conjugate to the hyperbolic automorphism $L$ that $f$ induces on $\Z^d=H_1(\T^d,\Z)$. 
 A {\em topological conjugacy} is
 a ho\-meo\-morphism $h$ of $\T^d$ such that 
\begin{equation} \label{Conj dif}
L\circ h= h \circ f.
\end{equation}
Any two such conjugacies differ by an affine automorphism of $\T^d$ commuting with $L$ \cite{W},
and hence have the same regularity.  
A conjugacy $h$ in \eqref{Conj dif} is always bi-H\"older, but it is usually not even $C^1$, as there are various obstructions to smoothness. 

The question when $h$ is smooth has been extensively studied and periodic data played an important role \cite{L0,LM,L1,G08,GKS11,S15,DG24}. The problem is closely related to rigidity of cocycles.
Indeed, if $h$ is $C^1$ then differentiating   \eqref{Conj dif} we obtain 
\begin{equation} \label{Conj Dh}
L\circ Dh(x)= Dh (fx) \circ Df(x)
\end{equation}
and hence $C(x)=Dh(x)$ is a continuous conjugacy of the derivative cocycles $Df$ and $DL=L$.
Thus conjugacy of the derivative cocycles is necessary for smoothness of $h$. 

This condition is not sufficient in general for {\it reducible} $L$. 
This can be seen  in the example by de la Llave \cite{L1}  of an automorphism $L(x,y)=(Ax, By)$  of $\T^4$ where $A,B\in SL(2,\R)$
have eigenvalues $\lambda, \lambda^{-1}$ and 
 $\mu, \mu^{-1}$, respectively, with $\mu> \lambda>1$. 
A perturbation  
$$f(x,y)= (Ax+ \varepsilon \sin (2\pi y_1)v,\, By),
$$
 where $v$ is an eigenvector of $A$ corresponding to $\lambda$,  is not $C^1$ conjugate to $L$. For any $p=f^np$, the matrix $Df^n_p$ has eigenvalues
$\lambda^n, \lambda^{-n}, \mu^n,$ $\mu^{-n}$  and hence is conjugate to $L^n$, and Theorem \ref{cocycle theorem}(i) yields that cocycles $Df$ and $L$ are H\"older cohomologous. 
\vskip.1cm

Our main result in this section, Theorem \ref{relation theorem}, shows that  conjugacy of the derivative cocycles is sufficient for {\it weakly irreducible} $L$. This assumption is weaker than irreducibility.
We recall that $L\in SL(d,\Z)$ is {\it irreducible} if its characteristic polynomial is irreducible over $\Q$, equivalently,  if it has no nontrivial rational invariant subspaces. An automorphism $L$ is irreducible if and only if every $L$-invariant linear foliation of $\T^d$ has dense leaves. The eigenvalues of an irreducible $L$ are simple, and hence $L$ is diagonalizable over $\mathbb C$, though different eigenvalues may have the same modulus. 
 The notion of weak irreducibility was introduced in \cite[Section 3.3]{KSW23} in the context of bootstrapping the regularity of $h$ to $C^\infty$, and further discussed in \cite[Section 2.2]{KS25}.  
 
 \begin{definition} We say that $L\in SL(d,\Z)$ is {\em weakly irreducible} if any of the following equivalent conditions holds:
\begin{itemize}
\vskip.1cm
\item[{\bf (i)}]  The leaves of each Lyapunov foliation of $L$ are dense in $\T^d$,
\vskip.1cm
\item[{\bf (ii)}]   All irreducible over $\Q$ factors of the characteristic polynomial of $L$ have the same set of moduli of the roots.
\end{itemize}
\end{definition}

A weakly irreducible $L$ is not necessarily irreducible and not necessarily  diagonalizable. For example, if $A$ is weakly irreducible, then so are 
$$
L= \small\left(\begin{array}{cc} A & 0\\ 0 & A \end{array}\right) \quad \normalsize \text{and}\quad L=\small\left(\begin{array}{cc} A & I\\ 0 & A \end{array}\right).
$$
Moreover, if $A,B\in SL(d,\Z)$ are weakly irreducible with the same set of moduli of eigenvalues, then 
$$
L=\small\left(\begin{array}{cc} A & 0\\ 0 & B \end{array}\right)\quad \normalsize \text{is weakly irreducible.}
$$

\begin{theorem}   \label{relation theorem}
Let $L:\T^d\to\T^d$ be a weakly irreducible  hyperbolic automorphism, and let  $f$ be a $C^{1+\text{H\"older}}$ diffeomorphism of $\T^d$ topologically conjugate to $L$. \\Assume one of the following
\begin{itemize}
\item[{\bf (i)}]   The derivative cocycle $Df$ is continuously conjugate to $L$,
\vskip.1cm
\item[{\bf (ii)}]  The diffeomorphism $f$ is sufficiently $C^1$ close to $L$, and there is a conjugacy between $Df$ and $L$ measurable 
with respect to an ergodic invariant measure (for $f$ or for $L$)  with full support and local product structure.
\end{itemize}
Then $f$ is $C^{1+\text{H\"older}}$ conjugate to $L$. Further, if $f$ is $C^\infty$ then it is $C^\infty$
conjugate to $L$.
\end{theorem}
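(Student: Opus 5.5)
Case (ii) reduces to case (i). If $f$ is $C^1$ close to $L$, then $Df$ is close to the constant cocycle $L$. If the measurable conjugacy is with respect to a measure for $L$, transport it by $h$ to an $f$-invariant measure; this measure again has full support and local product structure, since $h$ maps stable and unstable leaves to stable and unstable leaves. A measurable conjugacy between $Df$ and $L$ forces the periodic data to be conjugate: at a periodic point in the support, Lusin-type arguments give conjugacy of $Df^n_p$ and $L^n$, and a Liv\v{s}ic-type closing argument extends this to all periodic points. Theorem \ref{+meas} then shows that the measurable conjugacy agrees a.e. with a H\"older continuous one. So from now on we assume $C$ is a continuous (H\"older) conjugacy, $L\circ C(x)=C(fx)\circ Df(x)$.

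For case (i), the plan is to show that $h$ is $C^1$ along each Lyapunov foliation of $L$ and then glue.

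\emph{Transporting the splitting.} Let $\R^d=\bigoplus E^i$ be the Lyapunov splitting of $L$ into the generalized eigenspaces grouped by modulus. Define $\tE^i(x)=C(x)^{-1}E^i$. This is a continuous $Df$-invariant splitting on which $Df$ is, up to the conjugacy $C$, exactly $L|_{E^i}$. In particular the splitting is dominated, and $Df$ restricted to the sum of the stable, or of the unstable, spaces has the same exponents as $L$. Standard arguments then show that the fast-to-slow flags $\tE^s_{\le i}$ integrate to H\"older foliations $\tilde W^{i}$. The first real step is to show that $h$ maps $\tilde W^{i}$ to the linear foliations $W^i$ of $L$. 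Here weak irreducibility enters: the leaves of each $W^i$ are dense, and I would use the Gogolev-type argument that $h$ respects the fast foliations and then, inductively, the intermediate ones, exploiting density and the exact matching of growth rates.

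\emph{Smoothness along leaves.} Along a one-step Lyapunov leaf the cocycle $Df|_{\tE^i}$ is conjugate to $L|_{E^i}$. I would build a family of charts from $C$: the maps $\Phi_x:\tilde W^i(x)\to E^i$ obtained by integrating $C(y)|_{\tE^i}$ along the leaf. These charts are $C^{1+}$ and linearize $f$ along leaves. This requires differentiability of the stable holonomies, or of the normal forms, in low regularity; I expect this to be the technical heart, and the place where the differentiability of stable holonomies announced in the abstract is used. Comparing $\Phi_x$ with $h$ restricted to the leaf, the composite $h\circ\Phi_x^{-1}$ commutes with $L|_{E^i}$ up to translation and is bi-H\"older. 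An iteration argument, which contracts by $L^{-n}$ and uses that $C$ is uniformly continuous, then shows that $h$ is $C^{1+}$ along $\tilde W^i$ with derivative $C(x)|_{\tE^i}$ up to a fixed constant. The main obstacle is non-conformality within a Lyapunov space (Jordan blocks, or complex eigenvalues of equal modulus). There I would use that $C$ conjugates $Df$ exactly to the linear map, so the derivatives along the leaf are controlled by $L^n$ itself, and polynomial growth is harmless.

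\emph{Gluing and regularity.} Smoothness of $h$ along each of the transverse foliations $\tilde W^i$, with uniformly H\"older derivatives, gives $h\in C^{1+\text{H\"older}}$ by Journ\'e's lemma applied inductively: first within the stable and unstable directions, then combining the two. Hence $Dh=C$ up to a constant, and the symmetric argument for $h^{-1}$ shows that $h$ is a $C^{1+\text{H\"older}}$ diffeomorphism. For $C^\infty$ $f$ we apply the bootstrap for weakly irreducible $L$ from \cite{KSW23}, which upgrades a $C^{1+\text{H\"older}}$ conjugacy to a $C^\infty$ one.
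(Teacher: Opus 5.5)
Your outline (H\"older conjugacy, transported splitting, Lyapunov foliations preserved by $h$, leafwise smoothness, Journ\'e, bootstrap) matches the paper's. However, the two steps that carry the proof are not supported as you set them up.

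\textbf{Foliation preservation.} You cannot prove $h(\tilde W^i)=W^i$ up front, fast foliations first, from density and matching growth rates. Since $h$ is only $\alpha$-H\"older, for $y\in\w^{1,j}(x)$ the bound $d(L^nh(x),L^nh(y))\le c\,d(f^nx,f^ny)^{\alpha}$ does not kill the component of $h(y)-h(x)$ in $E^{j+1}\oplus\dots\oplus E^m$ unless $\alpha>\log\rho_{j+1}/\log\rho_j$. The mechanism that works, \cite[Proposition 5.1]{KS25}, needs $h$ to be already uniformly $C^{1+\beta}$ along the slower weak foliation $\w^{j+1,m}$. That is also the only place weak irreducibility enters. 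So preservation and smoothness must be proved together by a downward induction starting from the slowest exponent, which is the reverse of your order. The induction also needs the weak foliations $\w^{j,m}$, which you omit and which do not come from standard theory. By \cite[Lemma 4.3]{DG24} they exist and satisfy $h(\w^{j,m})=W^{j,m}$. This gives the base case $h(\w^m)=W^m$, and intersecting with $h(\w^{1,j})=W^{1,j}$ gives $h(\w^j)=W^j$.

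\textbf{Leafwise smoothness.} More seriously, this step has no working mechanism when $L|_E$ has Jordan blocks. For $\psi_x=h\circ\Phi_x^{-1}$ you know only $\psi_{fx}\circ L=L\circ\psi_x$ and continuity in $x$. Rescaling a uniform modulus of continuity of this family by $L^{\pm n}$ picks up the factor $\|L^n|_E\|\,\|L^{-n}|_E\|$. In the conformal case this factor is bounded, and the argument can be completed using density of $\w$-leaves. With a Jordan block it grows polynomially and the estimate diverges, so ``polynomial growth is harmless'' is exactly where your argument fails.

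The paper uses transverse information instead. Let $V$ be the linear foliation tangent to $E\oplus E^u$. Inside $\V=h^{-1}(V)$, the $\w^u$-holonomies $\mathcal{H}_{x,y}$ between $\w$-leaves are differentiable with $D_x\mathcal{H}_{x,y}=H^{u,\A}_{x,y}$. This is Lemma \ref{smooth hol}, and it is where the new holonomy result is used, not in building charts. Since $C$ intertwines $H^{u,\A}$ with the identity, these holonomies are isometries for the metric $g$ obtained by pushing a Euclidean metric forward by $C$. By density of $W^u$-leaves, $\mathcal{T}_v=h^{-1}\circ T_v\circ h$ is a $C^0$-limit of such holonomies, hence a $g$-isometry of $\w(x)$. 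Thus $v\mapsto\mathcal{T}_v$ is a continuous homomorphism from $E$ into the Lie group of isometries of $(\w(x),g)$. It is therefore smooth, and so is $h|_{\w(x)}$; conformality plays no role.

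\textbf{Case (ii) and the bootstrap.} A measurable conjugacy says nothing directly at periodic points, which form a null set, so your ``Lusin-type'' derivation of conjugate periodic data is unjustified. It is also unnecessary. $C^1$-closeness gives a dominated splitting with fiber-bunched blocks. The measurable conjugacy respects this splitting because it preserves Lyapunov exponents, and \cite[Theorem 2.1]{KSW23} then gives H\"older continuity directly. The global $C^\infty$ bootstrap is \cite[Theorem 1.1]{KSW25}.
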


The $C^\infty$ regularity is obtained using \cite{KSW25}.
The weak irreducibility assumption is dynamically natural, and examples of Gogolev in \cite{G08} show that this assumption is necessary in the theorem. The proof of this theorem shares a general approach
with \cite{GKS11}, however the proofs of the two main steps there, corresponding to Proposition \ref{smooth along W} and verifying its assumption here, rely on conformality of $L$ on its Lyapunov subspaces.
In our setting possibility of Jordan blocks requires different arguments, and the proof of Proposition \ref{smooth along W} uses a completely different approach. The only prior result for automorphisms
with Jordan blocks was obtained by DeWitt \cite{D} in dimension 4. 

In the proof of Theorem \ref{relation theorem} we also establish a result of independent interest, Lemma \ref{smooth hol},
on $C^{1+\text{H\"older}}$ regularity of stable holonomies in $C^{1+\text{H\"older}}$ setting.

\vskip.1cm 
Combining Theorem \ref{relation theorem}(i) with our results for cocycles  we obtain the following. 

\begin{corollary}   \label{rigidity corollary}
Let $L:\T^d\to\T^d$ be a weakly irreducible  hyperbolic automorphism. Let  $f$ be a $C^{1+\text{H\"older}}$ Anosov diffeomorphism of $\T^d$ topologically conjugate to $L$  with conjugate periodic data, that is, 
for each periodic point $p=f^n p$ there is
$$
C(p)\in GL(d,\R)\;\text{ such that }\; D_p f^{n}=C(p)\circ L^n \circ C(p)^{-1}.
$$
 Suppose that one of the following holds
\vskip.05cm
\begin{itemize}
\item[{\bf (i)}]  $L$ is diagonalizable over $\mathbb C$\, and  no three of  its eigenvalues have the same modulus, 
\vskip.1cm
\item[{\bf (ii)}]  $L$ is diagonalizable over $\mathbb C$\, and $C(p)$ is in a compact subset of $GL(d,\R)$,
\vskip.1cm
\item[{\bf (iii)}]  $C(p)$ is H\"older continuous at a periodic point of $f$, as in \eqref{Holder at q}.
\end{itemize}
\vskip.1cm
\noindent Then $f$ is $C^{1+\text{H\"older}}$ conjugate to $L$. 
Further, if $f$ is $C^\infty$ then any conjugacy is $C^\infty$.
\end{corollary}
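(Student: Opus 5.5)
The plan is to reduce Corollary \ref{rigidity corollary} to Theorem \ref{relation theorem}(i). That theorem needs only a continuous conjugacy between the derivative cocycle $Df$ and the constant cocycle $L$. The hypothesis gives conjugate periodic data for the cocycles $\A = Df$ on the bundle $T\T^d$ and $\B = L$, viewed as a constant $GL(d,\R)$ cocycle over $f$. We therefore only need to upgrade conjugate periodic data to a H\"older conjugacy using the cocycle results of Section \ref{cocycles sec}, one result per case.

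First, the setting. $f$ is a $C^{1+\text{H\"older}}$ Anosov diffeomorphism of $\T^d$ conjugate to $L$, hence topologically mixing, so it satisfies the standing assumptions on the base. $Df$ is H\"older continuous because $f$ is $C^{1+\text{H\"older}}$. The tangent bundle is trivial, so $Df$ is a $GL(d,\R)$ cocycle, and $\B_x = L$ is constant. The periodic data condition $D_pf^n = C(p)\, L^n\, C(p)^{-1}$ is exactly conjugacy of the periodic data of $\A$ and $\B$.

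Next, the three cases, each handled by one earlier theorem:
\begin{itemize}
\item[(i)] $L$ is diagonalizable over $\mathbb C$ and no three eigenvalues share a modulus. This is hypothesis (i) of Theorem \ref{cocycle theorem}, which gives a H\"older conjugacy between $Df$ and $L$.
\item[(ii)] $C(p)$ lies in a compact, hence bounded, subset of $GL(d,\R)$. Theorem \ref{cocycle theorem}(ii) applies and again gives a H\"older conjugacy.
\item[(iii)] We use Theorem \ref{Holder C(p)}. Its narrowness hypothesis holds trivially: $\B = L$ is constant, so for $p = f^n p$ the eigenvalues of $\B^n_p = L^n$ have moduli exactly $e^{n\la_i}$, where the $\la_i$ are the log-moduli of the eigenvalues of $L$. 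So $\B$ has $0$-narrow periodic data, and any required smallness of $\delta$ is met. Together with H\"older continuity of $C(p)$ at a periodic point $q$, the theorem gives a H\"older conjugacy $\bar C$ with $\bar C(q) = C(q)$. This case needs no diagonalizability.
\end{itemize}

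In all three cases we now have a continuous conjugacy between $Df$ and $L$. Theorem \ref{relation theorem}(i) then applies, since $L$ is weakly irreducible and hyperbolic and $f$ is $C^{1+\text{H\"older}}$ and topologically conjugate to $L$. It shows that $f$ is $C^{1+\text{H\"older}}$ conjugate to $L$, and $C^\infty$ conjugate if $f$ is $C^\infty$.

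The "any conjugacy" clause follows because any two topological conjugacies differ by an affine automorphism commuting with $L$, so they all have the same regularity. Since $f$ is $C^\infty$, the conjugacy is a homeomorphism $h$ with $L\circ h = h\circ f$, so a smooth $h$ with invertible derivative gives a $C^\infty$ conjugacy.

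There is no substantial obstacle; the real content is in the cited theorems. The only points to check are that the standing assumptions on the base and the H\"older regularity of $Df$ hold, and that in case (iii) the constant cocycle $L$, although it may have Jordan blocks, satisfies the $\delta$-narrow condition. That condition concerns only eigenvalue moduli, so it does.
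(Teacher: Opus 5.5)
Your proposal is correct and follows the paper's own argument exactly. Theorem \ref{cocycle theorem}(i) or (ii), or Theorem \ref{Holder C(p)} in case (iii), where the constant cocycle $L$ trivially has $0$-narrow periodic data, gives a H\"older conjugacy between $Df$ and $L$; Theorem \ref{relation theorem}(i) then yields the $C^{1+\text{H\"older}}$ (resp.\ $C^\infty$) conjugacy, and the ``any conjugacy'' clause follows because conjugacies differ by affine automorphisms commuting with $L$.
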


H\"older conjugacy of the derivative cocycles $Df$ and $L=DL$ is given  by Theorem \ref{cocycle theorem} in (i) and (ii), and by Theorem \ref{Holder C(p)} in (iii). Then 
Theorem  \ref{relation theorem}(i) yields smooth conjugacy of $f$ and $L$.

Corollary \ref{rigidity corollary}  is the most general periodic data rigidity result for Anosov toral automorphisms. 
Part (i) extends the similar global rigidity result in \cite{DG24} for  irreducible $L$ to the weakly irreducible setting. Part (ii) gives an alternative to the assumption on the eigenvalues of $L$. 
Part (iii) generalizes a similar  result in  \cite{S15} for  perturbations of an irreducible $L$. H\"older assumption \eqref{Holder at q} yields smoothness of the conjugacy with minimal assumptions 
on $L$ and $f$.

The diagonalizability assumption in (i) and (ii) is necessary. In \cite{L2} de la Llave gave an example of an automorphism 
$$
L=\small{\left(\begin{array}{cc} A & I\\ 0 & A\end{array}\right)}, \quad \text{ where $A \in SL(2,\Z)$ is hyperbolic,}
$$
and  its  analytic perturbation $f$ with conjugate periodic data that is not $C^1$ conjugate to $L$.
Clearly, $L$ does not have three eigenvalues of the same modulus. One can also see that the $C(p)$ can be chosen bounded. For such automorphisms of $\T^4$, DeWitt \cite{D} identified an additional condition for the periodic data that ensures smooth  conjugacy between $f$ and $L$. 
\vskip.1cm

In Section \ref{base} we describe the three main classes  of hyperbolic systems. In Section \ref{cocycle proofs} we prove our results for cocycles, Theorems \ref{Holder C(p)}, \ref{+meas}, and  \ref{cocycle theorem}. In Section \ref{rigidity proof} we prove Theorem \ref{relation theorem}.


\section{Hyperbolic systems in the base} \label{base}

We consider cocycles over hyperbolic dynamical systems. 
Below we describe the three main classes  of such systems.
\vskip.2cm
\noindent{\bf Transitive Anosov diffeomorphisms.} 
A diffeomorphism  $f$ of a compact connected  manifold $X$
 is called {\it Anosov}\, if there exist a splitting 
of the tangent bundle $TX$ into a direct sum of two $Df$-invariant 
continuous subbundles $\E^s$ and $\E^u$,  a Riemannian 
metric on $X$, and  continuous  
functions $\nu$ and $\hat\nu$  such that 
\begin{equation}\label{Anosov def}
\|Df_x(v^s)\| < \nu(x) < 1 < \hat\nu(x) <\|Df_x(v^u)\|
\end{equation}
for any $x \in X$ and any unit vectors  
$v^s\in \E^s(x)$ and $v^u\in \E^u(x)$.
The sub-bundles $\E^s$ and $\E^u$ are called stable and unstable. 
They are tangent to the stable and unstable foliations 
$\w^s$ and $\w^u$ respectively. 
The {\it local stable manifold} of $x$, $\,\w^s_{\text{loc}}(x)$, is a ball 
 centered at $x$ of radius $\rho$ in the intrinsic metric of $\w^{s}(x)$. 
We choose $\rho$  sufficiently small   so that $\w^s_{\text{loc}}(x)\cap \w^u_{\text{loc}}(z)$ consists of a single point 
for any sufficiently close $x$ and $z$ in $X$.
Local unstable manifolds are defined similarly. 
\vskip.05cm

A diffeomorphism $f$ is {\it (topologically) transitive} if there is a point $x$ in $\M$
with dense orbit. All known examples of Anosov diffeomorphisms have this property.
Any transitive Anosov diffeomorphism is {\it topological mixing}, that is, for any open sets $U_1,U_2 \subset X$ there exists $N\in\N$ such that $f^n(U_1)\cap U_2\ne \emptyset$ for all $n\ge N$.
\vskip.2cm


\noindent{\bf Topologically mixing diffeomorphisms of locally maximal hyperbolic sets.}  
More generally, let $f$ be a diffeomorphism of a manifold $\m$.
A compact $f$-invariant  set $X \subseteq \m$ is
called {\em hyperbolic} if there  exist a continuous $Df$-invariant splitting 
$T_X \m = E^s\oplus E^u$, and a Riemannian metric and 
continuous functions $\nu$, $\hat \nu$ on an open set
$U \supseteq X$ such that \eqref{Anosov def} holds for all $x \in X$. We assume that $f|X$ is topologically mixing.

The set $X$ is called {\em locally maximal} if 
$X= \bigcap_{n\in \Z} f^{-n }(U)$ for some open set $U\supseteq X$.


\vskip.2cm
\noindent{\bf Mixing subshifts of finite type.}
Let $M$ be $k \times k$ matrix with entries from $\{ 0,1 \} $ such that all 
entries of $M^N$ are positive for some $N\in \N$. Let
\vskip.15cm
$\hskip1cm X= \{ \,x=(x_n) _{n\in \Z}\, : \,\; 1\le x_n\le k \;\text{ and }\;
 M_{x_n,x_{n+1}}=1 \,\text{ for every } n\in \Z \,\}.$
\vskip.15cm
\noindent The shift map $f:X\to X\,$ is defined by 
$(fx)_n=x_{n+1}$.
The system $(X,f)$ is called a {\em  mixing  subshift of finite type}. 
We fix $\nu \in (0,1)$ and consider the metric 
\vskip.15cm
$\hskip1.5cm \dist(x,y) = d_\nu(x,y)=\nu^{n(x,y)},
\;\text{ where }\;n(x,y)=\min\,\{ \,|i|\,: \; x_i \ne y_i  \}.
$
\vskip.15cm
\noindent The following sets play the role of the local stable and unstable 
manifolds of $x$:
$$
\w^s_{\text{loc}}(x)=\{\,y\; | \;\, x_i=y_i, \;\;i\ge 0\,\}, \quad 
\w^u_{\text{loc}}(x)=\{\,y\; | \;\, x_i=y_i, \;\;i\le 0\,\},
$$
and we can take $\nu(x)=\nu$ and $\hat\nu (x) =\nu^{-1}$.

\section{Proofs of Theorems \ref{Holder C(p)}, \ref{+meas}, and  \ref{cocycle theorem}}\label{cocycle proofs}

\subsection{Existence of a dominated splitting.}$\;$\\
In the proofs, will be using the following result by DeWitt and Gogolev.

\begin{theorem}\cite[Theorems 1.3 and 3.10]{DG24} \label{DG}
Let $(X,f)$ be a transitive invertible subshift of finite type,  or more generally a homeomorphism of a compact metric space satisfying the Closing Property \cite[Definition 3.9]{DG24}.
Let $\la_1 \ge \la_2 \ge \dots \ge \la_d$ with $\la_k >\la_{k+1}$. For each $\beta' \in (0,1)$ there exists
$\delta>0$  such that if $\B$ is a $\beta'$-H\"older linear cocycle  with $\delta$-narrow periodic data centered at $(\la_1, \dots, \la_d)$ then $\B$ has a dominated splitting of index $k$.
\end{theorem}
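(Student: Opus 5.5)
This statement is quoted from \cite{DG24}, so I only plan a route. The idea is to reduce domination to a uniform exponential gap between singular values, and to control that gap using periodic data through exterior powers. By the Bochi--Gourmelon criterion, $\B$ has a dominated splitting of index $k$ if and only if there are $c,\eta>0$ such that $\sigma_{k+1}(\B^n_x)/\sigma_k(\B^n_x)\le c\,e^{-\eta n}$ for all $x\in X$ and $n\ge 0$. Here $\sigma_i$ are the singular values with respect to a fixed continuous metric on $E$. Write $S_j=\la_1+\dots+\la_j$. The identity
$$
\frac{\sigma_{k+1}}{\sigma_k}=\frac{(\sigma_1\cdots\sigma_{k+1})(\sigma_1\cdots\sigma_{k-1})}{(\sigma_1\cdots\sigma_k)^2}
$$
reduces the problem to uniform upper bounds on $\|\Lambda^j\B^n_x\|=\sigma_1\cdots\sigma_j$ and a uniform lower bound on $\sigma_1\cdots\sigma_k$.

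\emph{Step 1: Lyapunov exponents from periodic data.} Kalinin's approximation theorem holds for H\"older cocycles over systems with the Closing Property. It says that for every ergodic $f$-invariant $\mu$, the Lyapunov exponents of $\B$ (and of each $\Lambda^j\B$) are limits of exponents $\frac1n\log|\a_i|$ at periodic points $p=f^np$. Combined with $\delta$-narrowness, this gives $|\chi_i(\mu)-\la_i|\le\delta$ for every ergodic $\mu$ and every $i$. Hence the top exponent of $\Lambda^j\B$ is at most $S_j+j\delta$ for every $\mu$.

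\emph{Step 2: Uniform upper bounds.} The function $\log\|\Lambda^j\B^n_x\|$ is subadditive. By the semi-uniform subadditive ergodic theorem,
$$
\limsup_{n}\sup_x \tfrac1n\log\|\Lambda^j\B^n_x\|=\max_\mu \chi_{\rm top}(\Lambda^j\B,\mu)\le S_j+j\delta .
$$
So $\|\Lambda^j\B^n_x\|\le c_\epsilon e^{n(S_j+j\delta+\epsilon)}$ for all $x$ and $n$, and for each $j$.

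\emph{Step 3: Uniform lower bound via the determinant.} The scalar cocycle $\log|\det\B_x|$ is H\"older, and by narrowness its periodic averages lie within $d\delta$ of $S_d$. The same semi-uniform argument applied to $\pm\log|\det|$ (or a Liv\v{s}ic-type argument) gives $|\det\B^n_x|\ge c\,e^{n(S_d-d\delta-\epsilon)}$. Then
$$
\sigma_1\cdots\sigma_k=\frac{|\det\B^n_x|}{\sigma_{k+1}\cdots\sigma_d}\ge\frac{|\det\B^n_x|}{\|\Lambda^{d-k}\B^n_x\|}.
$$
Here $\|\Lambda^{d-k}\B^n_x\|$ is bounded above by Step 2 applied to the top $d-k$ exponents. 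Since those are not the bottom ones, I would instead apply Step 2 directly to the bottom $d-k$ exponents, using $\sigma_{k+1}\cdots\sigma_d=\|\Lambda^{d-k}(\B^{n}_x)^{-1}\|^{-1}\cdot|\det|\,/\,|\det|$. More precisely, I would use the inverse cocycle to bound $\sigma_d^{-1}\cdots\sigma_{k+1}^{-1}$ from below, or equivalently use $\sigma_1\cdots\sigma_k\ge |\det\B^n_x|\cdot\|\Lambda^{d-k}\B^n_x\|^{-1}$ together with $\|\Lambda^{d-k}\B^n_x\|\le \sigma_{k+1}\cdots\sigma_d\cdot(\sigma_1\cdots\sigma_{d-k})/(\sigma_{k+1}\cdots\sigma_d)$.

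The bookkeeping here is the main obstacle. One has to obtain a \emph{lower} bound on $\sigma_1\cdots\sigma_k$ uniformly in $x$, while semi-uniform theorems only give upper bounds. The cleanest route I expect is to apply Step 2 to the inverse cocycle $\B^{-1}$ over $f^{-1}$, whose $\delta$-narrow periodic data is centered at $(-\la_d,\dots,-\la_1)$. Its exterior power $\Lambda^{d-k}$ has norm $1/(\sigma_{k+1}\cdots\sigma_d)$. This yields $\sigma_{k+1}\cdots\sigma_d\ge c\,e^{n(\la_{k+1}+\dots+\la_d-(d-k)\delta-\epsilon)}$, which is again the wrong direction. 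So the lower bound on $\sigma_1\cdots\sigma_k$ must come from combining the determinant lower bound with the \emph{upper} bound on $\sigma_{k+1}\cdots\sigma_d\le \sigma_{k+1}^{\,d-k}$ obtained by iterating ratios, or by a direct minimum-exponent argument $\liminf_n\inf_x\frac1n\log(\sigma_1\cdots\sigma_k)\ge\min_\mu(\chi_1+\dots+\chi_k)(\mu)$. I would attempt to prove this inequality by taking weak$^*$ limits of empirical measures along almost-minimizing orbit segments, in the spirit of Kalinin's argument.

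\emph{Step 4: Conclusion.} Granting a uniform bound $\sigma_1\cdots\sigma_k\ge c\,e^{n(S_k-C\delta-\epsilon)}$ and the bounds from Step 2, the identity above gives
$$
\frac{\sigma_{k+1}}{\sigma_k}\le c'\,e^{n(\la_{k+1}-\la_k+C'\delta+3\epsilon)} .
$$
Since $\la_k>\la_{k+1}$, choosing $\delta$ and $\epsilon$ small makes the exponent negative. Bochi--Gourmelon then gives the dominated splitting of index $k$. The dependence of $\delta$ on $\beta'$ enters only through the constants in the closing and approximation arguments of Step 1.
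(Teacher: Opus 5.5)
The paper does not prove this statement; it quotes it from \cite{DG24}, so I judge your plan on its own. Several of your steps are correct: the Bochi--Gourmelon criterion, Kalinin's approximation giving $|\chi_i(\mu)-\la_i|\le\delta$ for every ergodic $\mu$, the semi-uniform bounds $\|\Lambda^j\B^n_x\|\le c_\varepsilon e^{n(\la_1+\dots+\la_j+j\delta+\varepsilon)}$, and the two-sided bounds on $|\det\B^n_x|$ from Birkhoff averages. The genuine gap is Step 3. The uniform lower bound on $\sigma_1\cdots\sigma_k$ that you ``grant'' in Step 4 is not bookkeeping; it is the whole content of the theorem, and none of your routes produces it. The inverse cocycle only gives lower bounds on $\sigma_j\cdots\sigma_d$. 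Via the determinant, these are essentially the same as the upper bounds on $\sigma_1\cdots\sigma_{j-1}$ that you already have. The bound $\sigma_{k+1}\cdots\sigma_d\le\sigma_{k+1}^{d-k}$ needs an upper bound on $\sigma_{k+1}$ of exactly the kind you are trying to prove. In fact, every inequality Steps 1--3 can deliver is satisfied by the gapless profile $\sigma_1=\dots=\sigma_d=e^{n(\la_1+\dots+\la_d)/d}$. So no combination of them can force $\sigma_{k+1}/\sigma_k$ to decay.

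Your last suggestion is to prove $\liminf_n\inf_x\frac1n\log\|\Lambda^k\B^n_x\|\ge\min_\mu\chi_{\mathrm{top}}(\Lambda^k\B,\mu)$ via weak$^*$ limits of empirical measures along almost-minimizing segments. This cannot work. Because $\log\|\cdot\|$ is only subadditive, limits of empirical measures bound growth from above but not from below. Along a bad segment the product can be small because an expanding direction gets turned into a contracting one, and the limit measure does not record this. The inequality is in fact false for continuous cocycles over non-hyperbolic bases. Take an $SL(2,\R)$ cocycle over an irrational rotation with positive exponent that is not uniformly hyperbolic, for example the almost Mathieu cocycle with coupling constant larger than $1$ at an energy in the spectrum. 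It has $\min_\mu\chi_1>0$ but $\liminf_n\frac1n\log\inf_x\|\B^n_x\|=0$. Kalinin's argument does not rescue this, since it uses tempered Lyapunov norms at regular points of a fixed measure, not arbitrary orbit segments. So Step 3 must use the Closing Property and the H\"older exponent in an essential way. For instance, one would need to turn a long orbit segment with small singular value gap into periodic data violating narrowness. That requires controlling how closing errors propagate through products of a cocycle that is not fiber bunched. Supplying this step, with $\delta$ depending on $\beta'$, is what \cite{DG24} does. In your scheme $\delta$ would not depend on $\beta'$ at all, since Kalinin's approximation has no quantitative dependence on the exponent. That is a sign the hard step has been assumed away.
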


We will apply this result to mixing hyperbolic dynamical systems described in Section \ref{base}, for which the Closing Property holds.  

A continuous splitting $E=G \oplus F$ is called a {\em dominated splitting} for a cocycle $\B$ on $E$ if 
it is $\B$-invariant and there exist constants $K>0$ and $0 < \tau< 1$ such 
that for every $x\in X$ and $n\in \N$, $$\| \B^n_x|F_x\| < K\tau^n m(\B^n_x|G_x),$$
where $m(B) =\| B^{-1}\|^{-1}$ denotes the conorm of a linear operator $B$. 
We say that the dominated splitting has index $k$ if $\dim G = k$.

\vskip.5cm

\subsection{Proof of Theorem \ref{Holder C(p)}.} $\;$\\
We assume that $\A$ and $\B$ are H\"older continuous linear cocycles with some exponent $\beta'>0$. 
We list the {\em distinct} values of $\la_1 , \dots, \la_d$ from the Definition \ref{delta narrow} of
$\delta$-narrow periodic data for  $\B$ as $\sigma_1 < \dots < \sigma_\ell$ and
denote by $d_1 , \dots, d_\ell$ their multiplicities.  If $\delta>0$ is sufficiently small, we can apply Theorem \ref{DG} to $\B$
and each gap $\sigma_i< \sigma _{i+1}$ and obtain the corresponding  dominated splitting 
for $\B$ of the bundle $E$
$$E =  E^1 \oplus \cdots \oplus   E^\ell$$
into the direct sum of continuous invariant subbundles with $\dim E^i=d_i$ for $i=1,\dots, \ell$.
Thus $\B$ splits into the direct sum of continuous cocycles $\B_i=\B| E^i$.

The subbundles $E^i$ and hence cocycles $\B_i$ are H\"older continuous with some exponent
$\hat \beta >0$. Specifically, there are $\bar \beta =\bar \beta (f, \sigma_1 , \dots , \sigma_\ell)$ and $\delta_0>0$ such that for all $\delta<\delta_0$ we can take
$\hat \beta = \min \{\beta ' , \bar \beta \}$.
Let $\beta''$ be the exponent of H\"older continuity of $C(p)$ at $q$. 
We set $\beta=\min \{\beta '', \hat \beta \}>0$. We will view $\B$ and $\B_i$ as  $\beta$-H\"older cocycles
and show the existence of $\beta$-H\"older conjugacy $\bar C$ between $\A$ and $\B$.

For each $\B_i$, the $\delta$-narrow assumption implies that the Lyapunov exponents of $\B_i$ 
at each periodic orbit are in the interval $[\sigma_i-\delta,\sigma_i+\delta]$. Then by 
\cite[Theorem 1.3]{K11} for any $\delta'>\delta$ there is constant $c$ such that
$$
 c^{-1}  e^{n(\sigma_i-\delta')} \|u\| \le \| \B_i^n u \|\le c\,e^{n(\sigma_i+\delta')}  \|u\|
\quad \text{for any $i$ and any vector }u\in E^i.
$$ 
In particular, if $\delta$ is small enough, then the $\beta$-H\"older cocycles $\B_i$ are $\beta$ fiber bunched.

\begin{definition} \label{bunching def}
 A $\beta$-H\"older  cocycle $\B$ over a hyperbolic system $(X,f)$ is\,
 {\em $\beta$ fiber bunched} if 
there exist numbers $\theta<1$ and $L$  such that for all $x\in X$ and $n\in \N$,
$$\| \B_x^n\| \cdot \| (\B_x^n)^{-1}\| \cdot  (\nu^n_x)^\beta < L\, \theta^n \quad\text{and}\quad
\| \B_x^{-n}\| \cdot \| (\B_x^{-n})^{-1}\| \cdot  (\hat \nu^{-n}_x)^\beta < L\, \theta^n, 
$$
\noindent where $\nu$ and $\hat \nu$ are as in \eqref{Anosov def}, 
$ \;\nu^n_x=\nu(f^{n-1}x)\cdots\nu(x)$  and  $\, \hat\nu^{-n}_x = (\hat\nu ^n_{f^{-n}x})^{-1}$.
\end{definition}

Since $\A$ and $\B$ have conjugate periodic data, $\A$ also has $\delta$-narrow periodic 
data centered at the exponents $\la_i$. Hence we also have  the corresponding 
 splittings for $\A$ 
  $$\E=\E^1 \oplus \dots \oplus \E^\ell \qquad\text{and}\qquad \A=\oplus \A_i,$$
where $\A_i=\A| \E^i$ satisfy the same estimates and are $\beta$-H\"older and fiber bunched.

At each periodic point the conjugacy $C(p)$ maps the splitting $E^1_x \oplus \cdots \oplus   E^\ell_x$
to the splitting $\E^1_x \oplus \cdots \oplus   \E^\ell_x$. Hence $C$ induces the conjugacy $C_i(p)$ of the periodic data
for $\B_i$ and $\A_i$, which is also $\beta$-H\"older continuous at $q$. 
Then  by Theorem 1.9 in \cite{S17}, for each $i$ there exists a unique $\beta$-H\"older conjugacy
$\bar C_i$ between cocycles $\B_i$ and $\A_i$ with $\bar C_i (q)=C_i(q)$. Then $\bar C=\oplus \bar C_i$ is the claimed $\beta$-H\"older conjugacy between $\A$ and $\B$.

\vskip.5cm
\subsection{Proof of Theorem \ref{+meas}.}
 As in the proof of Theorem \ref{Holder C(p)}, we obtain the splittings of $\B$ into the direct sum of constant cocycles  $\B_i=\B| E^i$ and of  $\A$
 into the direct sum of H\"older continuous cocycles $\A_i=\A| \E^i$.
Since  each  cocycle $\B_i$ is constant with one Lyapunov exponent $\la_i$, it is fiber bunched.
The conjugacy of the periodic data implies that 
 all Lyapunov exponents of $\A_i$ at the periodic orbits are also $\la_i$. 
 By periodic approximation of Lyapunov exponents \cite[Theorems 1.4]{K11},
 this  also holds for Lyapunov exponents of $\A_i$ for any $f$-invariant measure. Thus each cocycle $\A_i$ 
 has one Lyapunov exponent $\la_i$ with respect to every $f$-invariant  measure. 
 
Any measurable conjugacy preserves the Lyapunov exponents of vectors, see e.g.  \cite[Lemma 4.4] {KSW23}. Hence the conjugacy $C$ maps  $\mu$ a.e. the splitting $E^1_x \oplus \cdots \oplus   E^\ell_x$
to the splitting $\E^1_x \oplus \cdots \oplus   \E^\ell_x$, and so it induces  
 $\mu$-measurable conjugacy $C_i$ between $\B_i$ and $\A_i$. Since $\B_i$ is fiber bunched and $\A_i$ has 
 one Lyapunov exponent, $C_i$ coincides $\mu$ a.e. with a H\"older continuous conjugacy  
 by \cite[Theorem 2.1] {KSW23}. This holds for each $C_i$, and hence we conclude the same
 for their direct sum $C$.

\vskip.5cm

\subsection{Proof of Theorem \ref{cocycle theorem}.}
Let $\B_x =B$ be the constant generator of the cocycle $\B$, which is diagonalizable over $\mathbb C$. 
Let $\rho_1 < \dots <\rho_\ell$  be the distinct moduli of the eigenvalues of $B$ and let 
$\R^d = E^1 \oplus \cdots \oplus E^\ell$ be the Lyapunov splitting for $\B$, that is the invariant splitting 
into the direct sums of the eigenspaces corresponding to eigenvalues of modulus $\rho_i$, $i=1, \dots, \ell$. 
We denote $B_i=B| E^i$. Since  $\B$ is diagonalizable over $\mathbb C$, there exists an inner product on $\R^d$ with respect to which
$$
 \| B_i^n u \|=\rho_i^n \|u\|
\quad \text{for every $i=1, \dots, \ell$,\, every vector $u\in E^i$  and every $n$.}
$$ 
In particular, the  cocycle $\B_i$ generated by $B_i$ is fiber bunched for any $\beta>0$. 

As in the proof of Theorem \ref{Holder C(p)} we obtain the corresponding H\"older continuous 
invariant splitting $\E=\E^1 \oplus \dots \oplus \E^\ell$ for $\A$ and split $\A$ into the sum of H\"older continuous cocycles $\A_i=\A| \E^i$.
For any $p=f^np$, the restriction of the conjugacy $C(p)$ to $E^i$ conjugates $(\A_i)^n_p$ and $(\B_i)^n_p$. Since all $(\B_i)^n_p$ are conformal, it follows from our results in \cite{KS10}  that the cocycle $\A_i$ is also conformal with respect to some H\"older continuous  Riemannian metric on $\E^i$. Specifically, 
in 2-dimensional case (i) this follows from \cite[Theorem 1.3]{KS10} and in the general case
(ii) under the boundedness assumption on $C(p)$ this follows from \cite[Theorem 1.1]{KS10}. 

With respect to this metric, the norm $a_i(x)=\|\A_i(x)\|$ is an $\R_+$-valued cocycle with 
periodic data equal to that of the constant cocycle $\rho_i$. Hence by the  Liv\v{s}ic periodic point theorem, 
the cocycle $a_i$ is H\"older continuously 
cohomologous to the constant cocycle $\rho_i$ via a positive function $\phi(x)$. Then we can rescale the  Riemannian metric on $\E^i$ by $\phi(x)$ and obtain that $\|\A_i(x)\|=\rho_i$ with respect to the adjusted metric. Thus we also have
$$
 \| \A_i^n v \|=\rho_i^n \|v\|
\quad \text{for every vector $v\in \E^i$ and every $n$.}
$$ 
We will show that the cocycles $\A_i$ and $\B_i$ are 
cohomologous by some H\"older continuous conjugacy $C_i(x)$. Then $C=\oplus C_i$
conjugates $\A$ and $\B$.
\vskip.2cm

From now on we fix $i$.  If dim$(E^i)=1$, then H\"older conjugacy of $\A_i$ and $\B_i$ is given by the Liv\v{s}ic periodic point theorem. So from now on we assume that dim$(E^i)>1.$
We denote $\tA=(\rho_i)^{-1} \A_i$ and $\tB=(\rho_i)^{-1}B_i$.  
Then $\tB$ is a constant isometric cocycle on the trivial bundle $\tE=E^i$
and $\tA$ is a H\"older continuous cocycle on the H\"older continuous bundle $\TE=\E^i$
 isometric with respect to a H\"older continuous Riemannian metric $\sigma$ on $\TE$.
 The periodic data of these  cocycles are conjugate by $\tC(p)=C_i(p)=C|E^i(p)$.

  In case (ii) the set $\{\tC(p)\}$ is bounded by the assumption. We claim that 
 in case~(i) we also can choose $\{\tC(p)\}$ to be bounded. Indeed, we take any periodic point
 $p=f^n(p)$ and note that $\tA_p^n$ is isometric with respect to an inner product $\sigma (p)$ on $\TE$.
 Since $\sigma$ is continuous and thus bounded, we can choose  in a bounded way linear isomorphisms 
 $D_p: \TE_p \to \R^{2}$ which map $\sigma (p)$ to the standard inner product on $\R^{2}$. 
 Then $D_p\circ \tA_p^n \circ D_p^{-1}$ is an orthogonal matrix conjugate to the ortogonal matrix $\tB^n$
 by $D_p \tC(p)$. Thus $D_p\circ \tA_p^n \circ D_p^{-1}$ and $\tB^n$ are orthogonal matrices with
  the same eigenvalues and hence can be conjugate by an orthogonal matrix. It follows that $ \tA_p^n$ can be conjugate $\tB^n$ by a matrix in a bounded set.

\vskip.2cm
{\it First, we consider the case $f$ when has a fixed point $q=f(q)$.} 
Conjugating the cocycle $\A$ by $\tC(q)$ we can assume without loss of generality 
that the cocycles have the same value at $q$, that is $\tA_q=\tB_q=\tB$, and that $\tC(q)=\Id$.

Since $\tA$ is isometric, it is fiber bunched and hence has stable and unstable cocycle holonomies,
that is, 
$$H_{x,y}^s=H_{x,y}^{\tA,s}=\underset{n\to\infty}{\lim} (\tilde \A_y^n)^{-1}\circ \tilde \A_x^n\;\;
\text { exists for any $x\in X$ and any $y\in W^s(x)$, \,\,and}\;
$$
$$H_{x,y}^u=H_{x,y}^{\tA,u}=\underset{n\to\infty}{\lim} ((\tilde \A_y^{-n})^{-1}\circ \tilde \A_x^{-n})\;\;
\text { exists for any $x\in X$ and any $y\in W^u(x)$, }\;
$$
Above,  $W^s(x)$ and  $W^u(x)$ denote the stable and unstable leaves of a point $x\in X$.

The holonomies for the constant cocycle $\tB$ are trivial, that is, equal to the identity.
For a nontrivial H\"older continuous bundle $\TE$ one needs to consider appropriate 
H\"older continuous identifications of fibers at nearby points in the formulas above, and we refer to \cite{KS13} for more details. 
The existence and properties of holonomies (and their independence of the choice of identifications)  for this setting were established in \cite[Proposition 4.2]{KS13}. 
The holonomies $H^{s/u} _{x,\,y} : \TE_x \to \TE_y$ are
isomorphisms between the fibers of $\TE$ satisfying a natural equivariance property
\begin{equation} \label{H equiv}
\tA_x=H^{s/u} _{fy,\,fx} \circ \tA_y \circ H^{s/u} _{x,\,y}.
\end{equation}
They are also H\"older continuous along the leaves in the following sense
\begin{equation} \label{H holder}
\|H_{x,y}^{s/u} -\Id \|\le K\dist(x,y)^\beta \quad\text{for all $x \in X$ and all} \;y\in W^{s/u} _{loc}(x),
\end{equation}
where $\beta $ is a H\"older exponent of the cocycle and the constant $K$ depends on the size of the local leaves. This  follows from the estimates  \cite[Proposition 4.2(i)]{KS13}:
\begin{equation} \label{close to Id} 
\begin{aligned}
&\|(\tilde\A^n_y)^{-1} \circ  \tilde\A^n_x - \Id\,\| \leq K\,\dist (x,y)^{\beta}\,
\quad\text{for all $x\in X$, $y\in W^s_{loc}(x)$, $n\in \N$}, \\
&\|(\tilde\A^{-n}_y)^{-1} \circ \tilde\A^{-n}_x - \Id\,\| \leq K\,\dist (x,y)^{\beta}\,
\quad\text{for all $x\in X$, $y\in W^u_{loc}(x)$, $n\in \N$}.
\end{aligned}
\end{equation}

First we define conjugacies $C^s$ and $C^u$ on the stable and unstable leaves of the fixed point $q$. 
We set 
$$
\begin{aligned}
&C^s(q)=C^u(q)=\tC(q)=\Id,\\
 &C^s(x)=H^{s}_{q,\,x}\quad\text{for }x\in W^s(q),\\
 &C^u(x)= H^{u}_{q,\,x}\quad\text{for }x\in W^u(q).
 \end{aligned}
$$
Since $q=f(q)$ and $\tA_q=\tB$, using \eqref {H equiv} we obtain 
$$C^{s/u}(f x) \circ \tB_x \circ C^{s/u}(x)^{-1}=H^{s/u}_{q,\,fx} \circ \tB \circ (H^{s/u}_{q,\,x})^{-1}=
H^{s/u}_{q,\,fx} \circ \tA_q \circ H^{s/u}_{x,\,q}=\tA_x.
 $$
So  these are indeed conjugacies 
between $\tA_x$ and $\tB_x$ on the corresponding leaves.
\vskip.1cm

Now we show that if $x$ is a homoclinic point for $q$, 
that is, $x\in S :=W^s(q)\cap W^u(q)$, then 
\begin{equation} \label{Cs=Cu}
(C^s(x))^{-1}\circ C^u(x) = H^{s}_{x,q} \circ H^{u}_{q,x}= \Id,
\quad \text{that is, }\; C^s(x)=C^u(x).
\end{equation} 

By compactness of the orthogonal group we can find a sequence $n_k\to\infty$ such that $\tB^{n_k} \to \Id$.
Recall that we also have $\tA_q^{n_k}=\tB^{n_k}\to \Id$. For this sequence the holonomies
can be expressed as follows
$$
H_{x,q}^{s}=\lim _n((\tA_q^n)^{-1} \tA_x^n)=\lim _k((\tA_q^{n_k})^{-1} \tA_x^{n_k}) = \lim _k \tA_x^{n_k},
$$
and similarly as $q=f^{-n_k}(q)$
$$
H_{q,x}^{u}=\lim _k (\tA_x^{-n_k})^{-1} \tA_q^{-n_k} =\lim _k \tA_{f^{-n_k}(x)}^{n_k}( \tA_{f^{-n_k}(q)}^{n_k})^{-1}= \lim _k \tA_{f^{-n_k}(x)}^{n_k}.
$$
We conclude that
\begin{equation}\label{CsCu}
(C^s(x))^{-1}\circ C^u(x) = H^{s}_{x,q} \circ H^{u}_{q,x}
=\lim_k \tA_x^{n_k} \circ \tA_{f^{-n_k}(x)}^{n_k}= \lim_k \tA_{f^{-n_k}(x)}^{2n_k}.
\end{equation}

We  consider the orbit segment $\{f^{-n_k}x, \dots,x, \dots f^{n_k}(x)\}$.
Since both $f^{-n_k}x \to q$ and $f^{n_k}(x) \to q$ as $k \to \infty$, for large enough $k$
$$\delta_k= \dist (f^{-n_k}x,f^{n_k}(x))$$ 
is sufficiently small. Hence the Anosov Closing Lemma \cite[6.4.15-17]{KtH}) for $(X,f)$ applies and yields existence of a periodic point $p_k=f^{2n_k}(p_k)$ whose orbit 
is close to the orbit segment above, and in particular 
$$
\dist (f^{-n_k}x, p_k) \le K_1 \delta_k \quad \text{ and } \quad \dist ((f^{n_k}x), p_k) \le K_1 \delta_k. 
$$

Now we show  that $\tA_{f^{-n_k}(x)}^{2n_k}$ is close to $\tA_{p_k}^{2n_k}$. By the local product structure
there exits a unique point
$$y = W^s_{loc}(f^{-n_k}x)\cap W^u_{loc}(p_k).
$$ 
Then $ \dist (f^{-n_k}x,y) \le K_2 \delta_k$, and the orbit segment $\{y, \dots, f^{2n_k}(x)\}$ 
is close to both orbit segments $\{f^{-n_k}x,\dots, f^{n_k}(x)\}$ and $\{p_k, \dots, f^{2n_k}(p_k)=p_k\}$. 
It follows that  
$$
f^{2n_k}y = W^s_{loc}(f^{n_k}x)\cap W^u_{loc}(p_k),\; \text{ and so }\;\dist (f^{2n_k}y,p_k) \le K_2 \delta_k.
$$
Then applying \eqref{close to Id}
to $f^{-n_k}x$ and $y$,\, and to $f^{2n_k}y$ and $f^{2n_k}p_k=p_k$ we obtain
$$
\begin{aligned}
&\|  (\tA_{y}^{2n_k})^{-1} \tA_{f^{-n_k}(x)}^{2n_k}  -\Id \| \le K(K_2 \delta_k)^\beta \qquad \text{and}\\
&\| \tA_{p_k}^{2n_k} (\tA_{y}^{2n_k})^{-1}  -\Id \| =\| (\tA_{f^{2n_k}y}^{-2n_k})^{-1} \tA_{f^{2n_k}p_k}^{-2n_k}  -\Id \|\le K(K_2 \delta_k)^\beta.
\end{aligned}
$$
Since the cocycle $\tA$ is isometric, it follows that 
$$
\begin{aligned}
&\| \tA_{f^{-n_k}(x)}^{2n_k} - \tA_{p_k}^{2n_k} \| \le 
\| \tA_{f^{-n_k}(x)}^{2n_k} - \tA_{y}^{2n_k} \|+\| \tA_{y}^{2n_k} - \tA_{p_k}^{2n_k} \|=\\
&\| \tA_{y}^{2n_k}\big( (\tA_{y}^{2n_k})^{-1}\tA_{f^{-n_k}(x)}^{2n_k} - \Id\big) \|+\|\big( \Id - \tA_{p_k}^{2n_k} (\tA_{y}^{2n_k})^{-1} \big)\tA_{y}^{2n_k}\|\le 2K(K_2 \delta_k)^\beta.
\end{aligned}
$$
Hence $\| \tA_{f^{-n_k}(x)}^{2n_k} - \tA_{p_k}^{2n_k} \|\to 0$ as $k\to\infty.$ 

We recall that in both cases (i) and (ii) the set $\{\tilde C(p)\}$ is bounded.
Since $\tC(p_k)$ conjugates the periodic values of $\A$ and $\B$ at $p_k$, we have
$$
\begin{aligned}
&\| \tA_{p_k}^{2n_k} - \Id\|= \|\tC(p_k)^{-1} \tB^{2n_k}\tC(p_k) -\Id\| \le \\
& \|\tC(p_k)^{-1}\| \cdot \|\tC(p_k)\|\cdot \|\tB^{2n_k} -\Id\| \le K_3\|\tB^{2n_k} -\Id\|.
\end{aligned}
$$
Since $\tB^{2n_k} \to \Id$,\,  it follows that $\tA_{p_k}^{2n_k} \to \Id$. Thus using \eqref{CsCu} we conclude that
$$
(C^s(x))^{-1}\circ C^u(x)  = \underset{k}{\lim} \, \tA_{f^{-n_k}(x)}^{2n_k}= \Id, \quad\text{that is,}\quad C^s(x)=C^u(x).
$$

For each $x\in S =W^s(q)\cap W^u(q)$ we define
$$
\bar C(x) =C^s(x)=C^u(x).
$$
The function $\bar C$ is a conjugacy between $\tA$ and $\tB$ 
on the $f$-invariant homoclinic set  $S$, which is known to be dense in $\M$ \cite{Bo}. 
We will now show that the function  $\bar C$ is H\"older continuous 
on $S$. Then it  extends to a H\"older continuous function on $X$
which conjugates the cocycles. 

Let $x$ and $y$ be two sufficiently close points in $S$. 
We consider the point $z=W^u_{loc}(x)\cap W^s_{loc}(y)$, which is also in $S$.
By the definition of $\bar C=C^s$ 
$$
\bar C(z)= C^s(z)= H^{s}_{q,z} \qquad \bar C(y)= C^s(y)= H^{s}_{q,y},
$$
 and using properties of holonomies we obtain
\begin{equation}\label{CC^s}
 \bar C(z) \circ \bar C(y)^{-1} =  H^{s}_{q,z}  \circ (H^{s}_{q,y})^{-1} = 
 H^{s}_{q,z}  \circ H^{s}_{y,q} = H^s_{y,z}. 
\end{equation}
Similarly, using unstable holonomies, we obtain $C(x)\circ C(z)^{-1}= H^u_{z,x}$ and hence
\begin{equation}\label{CC^u}
\bar C(x)\circ \bar C(y)^{-1} = C(x)\circ C(z)^{-1}\circ C(z)\circ C(y)^{-1} = H^u_{z,x} \circ H^s_{y,z}. 
\end{equation}
Since 
$$
\begin{aligned}
& \|H_{z,x}^{u} -\Id \|\le K\dist(x,z)^\beta, \quad
\|H_{y,z}^{ u} -\Id \|\le K\dist(y,z)^\beta, \;\;\text{ and }\\
&\max \{ \dist(x,z),\dist(y,z)\} \le K_1 \dist(x,y),
\end{aligned}
$$ 
we conclude that
$$
\| \bar C(x)\circ \bar C(y)^{-1}  -\Id \|\le K'\dist(x,y)^\beta.
$$
We note that since the cocycle $\tA$ is isometric with respect to a H\"older continuous Riemannian metric,
its holonomies $H^s_{x,y}$ and $H^u_{x,y}$ are uniformly bounded in $x$ and $y$, and hence by definition $\|\bar C\|$ and $\|\bar  C^{-1}\|$ are bounded on $S$ by some constant $M$.
Hence 
\begin{equation} \label{d(CC)}
\begin{aligned}
 & d(\bar C(x),\bar C(y))=\|\bar C(x)-\bar C(y)\|+ \|\bar C(x)^{-1}-\bar C(y)^{-1}\|\le\\
 &\le  \|\bar C(x)\bar C(y)^{-1}-\Id \|\cdot \|\bar C(y)\|+
 \|\bar C(x)^{-1}\|\cdot \|\Id-\bar C(x)\bar C(y)^{-1}\| \le \\ 
 &\le 2MK' \,\dist(x,y)^\beta.
\end{aligned}
\end{equation}
We conclude that $\bar C$ is $\beta$-H\"older on $S$ and hence extends to a $\beta$-H\"older continuous function on $X$ which conjugates the cocycles.
 This completes the proof that $\tA$ and $\tB$ are $\beta$-H\"older cohomologous under the extra assumption that $f$ has a fixed point.
\vskip.2cm 

{\it Removing the fixed point assumption.}
 Now we deduce the result in full generality, 
 using the fact that $f$ has periodic points of all sufficiently large periods. 

Let $p_1$ be a periodic point of $f$ of some period $N$. Then $p_1$ is the fixed point for $f^N$ 
and we can apply the previous argument to the cocycles $\tA^N$ and $\tB^N$ over $f^N$.
Hence there exists a $\beta$-H\"older conjugacy $C_1$ between $\A^N$ and $\B^N$. 
We want to show that there exists a conjugacy between the original cocycles $\A$ and $ \B$ over $f$.
 
We define the {\em centralizer}\, of a $GL(d,\R)$ cocycle $\B$ as the set of self-conjugacies
 $$
    Z(\B)=\{ D:\M\to GL(d,\R)\; |\;\; \B_x=D(fx)\circ \B_x \circ D (x)^{-1} 
    \quad\text{for all }x\in \M \},
 $$
where all maps are considered  in the $\beta$-H\"older category. It is easy to see that $Z(\B)$ is a group with respect to pointwise multiplication. 
Let $\,\mathcal{C}(\B, \A)$ be the set of $\beta$-H\"older conjugacies between $\beta$-H\"older cocycles $\A$ and $\B$, and let $\,C_1 \in \mathcal{C}(\B, \A)$. Then $C_2 \in \mathcal{C}(\B, \A)$ \ if and only if $\;C_1C_2^{-1}\in Z(\B)$. 
Clearly $Z(\B)$ is a subgroup of $Z(\B^{k})$. It was shown in
\cite[Proposition 4.8]{S15}  that for any fiber bunched cocycle $\B$ there exists  
$M\ge 1$ such that 
$$\;Z(\B^{MT}) = Z(\B^M)\;\text{ for all  }\;T\ge 1.
$$
Applying this to $\tB^N$ we see that there exists $M$ such that
$$Z(\tB^{NM\cdot T}) = Z(\tB^{NM}) \;\text{ for all  }\;T\ge 1.$$ 
We pick a periodic point $p_2$ of a period $K>1$ relatively prime with $MN$. 
Using the argument with the fixed point for $f^K$ we obtain  a  $\beta$-H\"older conjugacy $C_2$ between the cocycles $\tA^K$  and $\tB^K$ over $f^K$.  
Since $C_1$ is a $\beta$-H\"older conjugacy between $\tA^N$ and $\tB^N$, 
it is also a conjugacy between $\tA^{NM}$ and $\tB^{NM}$. Similarly,   both $C_1$ and $C_2$ are $\beta$-H\"older conjugacies 
 between the cocycles $\tA^{NMK}$ and $\tB^{NMK}$ over $f^{NMK}$ and hence 
 $$C_1C_2^{-1} \in Z(\tB^{NMK})= Z(\tB^{NM}).
 $$ Now it follows that $\;C_2$ is also a conjugacy
between the cocycles $\tA^{NM}$ and $\tB^{NM}$.
Thus $C_2$ is a conjugacy for the cocycles over $f^{NM}$ and $f^K$,
where  $MN$ and $K$ are relatively prime.
Hence there exist integers $r$ and $s$ such that
$NMr+Ks=1$, and it is easy to see  that $C_2$ is also a conjugacy 
for the cocycles $\tA$ and $\tB$ over $f$.

This completes the proof of the theorem. $\QED$
\vskip.3cm



\section {Proof of Theorem \ref{relation theorem}} \label{rigidity proof}

First we establish that the conjugacy $C$ between the derivative cocycle $\A=Df$ 
and the constant cocycle $L$ is $\b$-H\"older for some $0<\beta<1$. 
Then we use this to show that a conjugacy $h$ between $f$ and $L$ is $C^{1+\b}$.
The bootstrap to $C^\infty$ regularity follows immediately from  \cite[Theorem 1.1]{KSW25}.

\subsection{H\"older continuity of the conjugacy between $Df$ and $L$.}$\;$\\
Let $\,0<\rho_1 <\dots <\rho_\ell\,$ be the distinct moduli of the eigenvalues of $L$, 
and let $\R^d= E^1\oplus\dots \oplus E^\ell$ be the corresponding Lyapunov splitting for $L$.

Suppose that the conjugacy $C$ is continuous. Then
\begin{equation} \label{splitting}
T\T^d =\E= \E^1 \oplus \dots \oplus \E^\ell, \;\text{ where }\E^i= C(E^i),
\end{equation}
is a continuous $\A$-invariant splitting. Moreover,  for each $i$, the cocycle $\A_i=\A|\E^i$ has the same  expansion/contraction rate as $L_i=L|E^i$. 
Thus the splitting  is dominated for $\A$, and hence it is $\b$-H\"older for some $0<\beta<1$.  Also, since $\|\A_i^n\|\cdot \|(\A_i^n)^{-1}\|$ grows at most polynomially as for $L_i$, it follows that each $\A_i$ is fiber bunched.
Then the conjugacy $C_i=C|E^i$ is $\b$-H\"older  by \cite[Theorem 2.1] {KSW23}
as a measurable conjugacy between a fiber bunched cocycle and a constant cocycle 
with one exponent. So we conclude that $C$ is $\b$-H\"older.

Now suppose that the conjugacy $C$ is measurable and $f$ is $C^1$ close to $L$.
Since $f$ is a $C^1$-small perturbation of $L$, we have a continuous $Df$-invariant 
splitting $\oplus\E^i$ close to $\oplus E^i$ with similar expansion/contraction rates. 
In particular, it is dominated for $\A$ and hence it is $\b$-H\"older for some $0<\beta<1$. 
Each $\A_i=\A|\E^i$ if fiber bunched as it is close to $L_i$. Also, we have $\E^i= C(E^i)$ almost
everywhere since any measurable conjugacy preserves the Lyapunov exponents of vectors, 
see e.g. \cite[Lemma 4.4] {KSW23}. 
Hence each $C_i=C|E^i$ is a measurable conjugacy between $\A_i$ and $L_i$, and it is $\b$-H\"older continuous as above by \cite[Theorem 2.1] {KSW23}. So we again conclude that $C$ is $\b$-H\"older.

\subsection{The main argument} Let $h$ be a topological conjugacy  between $L$ and $f$. 
Without loss of generality we can take  $h$ in the homotopy class of the identity. 
The stable and unstable foliations of $f$ are topological foliations with $C^{1+\beta}$ 
leaves, moreover the leaves vary continuously in $C^1$ topology and their tangent  bundles
$\E^s$ and $\E^u$ are $\b$-H\"older on $\T^d$. We say that such foliations have 
{\em uniformly $C^{1+\beta}$ leaves}. 

We will now prove that $h$  is a {\em uniformly $C^{1+\b}$ 
diffeomorphism} along $\w^s$. By this we mean that its restrictions $h|{\w^s}(x)$ to the stable leaves  are 
$C^{1+\b}$ diffeomorphisms that depend continuously on $x$ in $C^1$ topology and the derivative
$D_x(h|{\w^s}(x))$
is $\b$-H\"older  on $\T^d$. 
Similarly, $h$  is a uniformly $C^{1+\b}$ diffeomorphism along $\w^u$, and then $h$  is a $C^{1+\b}$ 
diffeomorphism of $\T^d$ by Journ\'e lemma \cite{J}. Let $1\le m <\ell$ be such that
$$
E^s= E^1 \oplus \dots \oplus E^m
$$
is the full stable sub-bundle for $L$. We denote by $W^i$ the invariant linear foliations of $L$ 
corresponding to Lyapunov sub-bundles $E^i$, and by $W^{j,m}$ the foliation corresponding to the sub-bundle $E^{j,m}=E^j \oplus \dots \oplus \E^m$ where $1 \le j \le m$. 

We use similar notations $\E^s= \E^1 \oplus \dots \oplus \E^m$ for the Lyapunov splitting of $f$.  By \cite[Lemma 4.3]{DG24} for each $1 \le j \le m$ the Lyapunov 
 subbundle $\E^j$ and  the weak stable subbundle 
$$\E^{j,m}=\E^j \oplus \dots \oplus \E^m$$ are tangent to $f$-invariant foliations with uniformly $C^{1+\b}$ leaves, 
denoted by $\w^i$ and $\w^{j,m}$ respectively. Moreover, each weak foliation $\w^{j,m}$ is mapped by the conjugacy to the corresponding linear foliation for $L$, that is, $h(\w^{j,m})=W^{j,m}$. 
In particular, we have $h(\w^{m})=W^m$. The main part of the proof is the following proposition.

\begin{proposition} \label{smooth along W}
If   $h$ maps $\w^j$ to $W^j$ for some $j$, then $h$  is a uniformly $C^{1+\b}$ diffeomorphism along $\w^j$.
\end{proposition}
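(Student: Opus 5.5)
Assume $h(\w^j)=W^j$. We have the $\b$-H\"older conjugacy $C$ between $\A=Df$ and $L$ from the previous subsection, with $C(E^j)=\E^j$. Our plan is to show that $h$ restricted to each leaf $\w^j(x)$ is differentiable, with derivative given by a H\"older renormalized version of $C_j^{-1}=(C|E^j)^{-1}$. We work leafwise. On $W^j$ the map $L$ acts affinely, with linear part $L_j$. On $\w^j$ the map $f$ acts by $C^{1+\b}$ diffeomorphisms, with derivative $\A_j$. Since $C_j$ conjugates $\A_j$ to $L_j$, we get a candidate derivative field $\Phi(x)=C_j(x)^{-1}:\E^j_x\to E^j$. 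However, $C$ is determined only up to the centralizer of $L_j$, so we must adjust it. The key object is the ``affine structure'' on $\w^j$ leaves. Because $\A_j$ is conjugate to the constant cocycle $L_j$ with one exponent modulus $\rho_j$, it is fiber bunched. Using the holonomy-type argument, we build for each $x$ a $C^{1+\b}$ chart $\Psi_x:\w^j(x)\to \E^j_x$. It is defined as the limit $\Psi_x=\lim_n (\A^n_{j,x})^{-1}\circ \exp^{-1}_{f^nx}\circ f^n$ when $\rho_j<1$, and we take backward iterates otherwise. This is the standard construction of normal form coordinates, as in Kalinin--Sadovskaya, Guysinsky--Katok, or Melnick. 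Convergence is exactly where the Jordan blocks matter. Instead of conformality, we use the uniform bound $\|\A_j^n\|\cdot\|(\A_j^n)^{-1}\|\le$ polynomial$(n)$, which we transport from $L_j$ via the bounded conjugacy $C$. This polynomial bound beats the exponential H\"older error $\rho_j^{n\b}$ coming from the $C^{1+\b}$ nonlinearity. In these charts $f$ becomes linear, namely $\Psi_{fx}\circ f\circ\Psi_x^{-1}=\A_{j,x}$, and the charts depend H\"older continuously on $x$.

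Next we compare with $L$. In the coordinates $\Psi$ on the $f$ side and the linear coordinates on the $W^j$ side, $h$ becomes a family of homeomorphisms $g_x=h\circ\Psi_x^{-1}$ composed with translation, $g_x:\E^j_x\to E^j$. These maps intertwine the linear maps $\A_{j,x}$ and $L_j$, that is, $g_{fx}\circ \A_{j,x}=L_j\circ g_x$. Composing with $C_j$, we get continuous maps $G_x=g_x\circ C_j(x):E^j\to E^j$ with $G_{fx}\circ L_j=L_j\circ G_x$. The goal is to show $G_x$ is linear. We would use the density of leaves of $W^j$ (weak irreducibility) together with a rigidity argument. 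Fix a periodic point $p=f^np$. Then $G_p$ commutes with $L_j^n$, and it is continuous and fixes $0$. We want to show that $G_p$ is in fact linear, and more precisely that it lies in the centralizer of $L_j^n$. This is the step I expect to be hardest.

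My first attempt at this step is the following. The conjugacy $h$ intertwines the charts $\Psi$ with the translation structure on $W^j$ leaves. The holonomies of $\w^{j,m}$ inside the weak leaves are sent by $h$ to translations, since $h(\w^{j,m})=W^{j,m}$. The chart system $\Psi$ is canonical, so it is equivariant under these sub-foliation holonomies. Hence the maps $G_x$ are all ``the same'' up to holonomy along a dense set. A continuous map $G$ that commutes with $L_j$ along the whole orbit, and is uniformly continuous at $0$, must be homogeneous with respect to the $L_j$-action. Combining homogeneity under $L_j^n$ with additivity, we aim to conclude linearity. Additivity comes from the fact that translations on $W^j$ leaves correspond to translations in the $\Psi$ charts; this should follow from uniqueness of the normal forms. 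An alternative route is to show that $h|_{\w^j}$ is Lipschitz. One would bound $\|G_x\|$ uniformly using the bounded $C$ and the polynomial growth of $L_j$. One would then apply Rademacher along leaves, get a measurable derivative, and argue that it is a measurable conjugacy between $\A_j$ and $L_j$. It would then be H\"older by \cite[Theorem 2.1]{KSW23}, and so $h|_{\w^j}$ would be $C^1$ with H\"older derivative. Finally, uniform $C^{1+\b}$ regularity follows from the formula $D(h|\w^j)=D G_x\circ C_j^{-1}\circ D\Psi_x$, in which each factor is $\b$-H\"older.
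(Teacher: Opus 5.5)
\textbf{There is a genuine gap.} The step you flag as hardest is the entire content of the proposition, and neither of your routes gets through it.

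Commuting with $L_j$ along orbits gives only homogeneity, not linearity. For example, $G(a,b)=(a+b\,\psi(\log|b|),\,b)$, with $\psi$ continuous, nonconstant and $|\log\rho_j|$-periodic, is a homeomorphism that commutes with both $\rho_j\Id$ and $\rho_j\bigl(\begin{smallmatrix}1&1\\0&1\end{smallmatrix}\bigr)$. It is not linear, and not even Lipschitz when $\psi$ is rough. So everything rests on additivity: you must show that the homeomorphisms $\t_v=h^{-1}\circ T_v\circ h$ of $\w^j(x)$ are affine in your charts $\Psi_x$.

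Uniqueness of normal forms cannot give this. It yields equivariance only under maps already known to be $C^{1+\b}$ along leaves, while the $\t_v$ are a priori just homeomorphisms, so the argument is circular. The holonomies you point to, along $\w^{j+1,m}$ between $\w^j$-leaves inside $\w^{j,m}$, do not help either. Under $f$, $\w^{j+1,m}$ is contracted more weakly than $\w^j$, and under $f^{-1}$ it is expanded. So in neither time direction is there the domination that makes holonomies differentiable, and nothing forces them to be affine in $\Psi$.

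Your Lipschitz alternative breaks down exactly in the case the proposition is about. From $G_x=L_j^n\circ G_{f^{-n}x}\circ L_j^{-n}$ and a uniform bound on balls, the loss factor is $\|L_j^n\|\,\|L_j^{-n}\|$. This grows polynomially when $L_j$ has Jordan blocks, so you only get $|G_x(u)|\le K|u|\,(\log(1/|u|))^c$, not a Lipschitz bound. Even granting Lipschitz, Rademacher gives a derivative only Lebesgue-a.e.\ on leaves. You would still need a.e.\ invertibility, and measurability with respect to an ergodic measure with local product structure, to invoke \cite[Theorem 2.1]{KSW23}.

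The missing idea is to use holonomies that are genuinely smooth and carry an isometric structure that Jordan blocks do not destroy. The paper proceeds as follows.
\begin{enumerate}
\item Since $h(\w^j)=W^j$, the foliations $\w^j$ and $\w^u$ integrate to $\V=h^{-1}(V)$, where $V$ is tangent to $E^j\oplus E^u$.
\item Lemma \ref{smooth hol} is a new estimate via the nonstationary linearization, under the bunching $\hat\gamma\gamma\,\nu^{\b/(1+\b)}<1$ and $\hat\gamma\gamma^{1+\b}<1$. It shows that the $\w^u$-holonomies $\H_{x,y}:\w^j(x)\to\w^j(y)$ are differentiable, with derivative the cocycle holonomy $H^{u,\A}_{x,y}$.
\item $C$ intertwines $H^{u,\A}$ with the trivial holonomy of $L|E^j$. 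Hence these maps are isometries for the H\"older metric $g=C_*(\text{Euclidean})$, even though $\A$ itself need not be isometric for $g$.
\item The unstable leaves of $L$ are dense; this holds for any hyperbolic $L$, and weak irreducibility is not used here. This exhibits each $\t_v$ as a $C^0$-limit of such holonomies, hence an isometry of $(\w^j(x),g)$.
\item Then $v\mapsto\t_v$ is a continuous homomorphism from $E^j$ into a Lie group, namely the Euclidean group of $\E_x$ via $\h_x$. It is therefore smooth, which gives the $C^{1+\b}$ regularity of $h|\w^j$.
\end{enumerate}
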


We use this proposition in an inductive process showing that $h$  is a uniformly $C^{1+\b}$ diffeomorphism 
along the weak foliations $\w^{j,m}$
for $j=m, \dots ,1$. The base case is given by applying the proposition with  $j=m$,
and in the end we obtain smoothness along the full stable $\w^{1,m}=\w^s$. 

For the inductive step,
we assume that $h$  is a uniformly $C^{1+\b}$ diffeomorphism along $\w^{j+1,m}$.
Then we claim that the fast foliation $\w^{1,j}$, which always exists,
is mapped to the corresponding linear one, $h(\w^{1,j})=W^{1,j}$.
This is given by implication (4)$\implies$(1) of  \cite[Theorem1.1]{KS25}, and specifically follows
from \cite[Proposition 5.1]{KS25}. Its proof does not require regularity of $f$ higher than uniformly $C^{1+\b}$, 
and closeness of $f$ to $L$ is used only to obtain continuity of the splitting $\E^s= \E^1 \oplus \dots \oplus \E^m$. Since $L$ is weakly irreducible, the leaves of each Lyapunov foliation $W^i$ are dense in $\T^d$,
satisfying the assumption in  \cite{KS25}. This is the only place where weak irreducibility of $L$ is used
in the proof of Theorem \ref{relation theorem}. 

Since we always have $h(\w^{j,m})=W^{j,m}$, by intersecting we obtain that $h(\w^{j})=W^j$.
Now we apply Proposition \ref{smooth along W} to concude that $h$  is a uniformly $C^{1+\b}$ diffeomorphism along $\w^j$. Together with the assumed smoothness along $\w^{j+1,m}$, this yields by  Journ\'e lemma 
that $h$  is a uniformly $C^{1+\b}$ diffeomorphism along $\w^{j,m}$ and completes the inductive step.

To complete the proof of Theorem \ref{relation theorem} it remains to establish Proposition \ref{smooth along W}.


\subsection{Proof of Proposition \ref{smooth along W}}

We fix $j$ and write 
$$
\text{$\w$ for $\w^j$,\, $W$ for $W^j$,\, $\E$ for $\E^j$,\, and \,$E$ for $E^j$.}
$$
We will use nonstationatry linearizations of $f$ along $\w$ given by the following result.

\begin{lemma} \label{lin} \cite[Corollary 4.8]{K24} {\em(Non-stationary linearization)}. $\;$\\
Let $f$ be a $C^{1+\b}$, $0<\b<1$, diffeomorphism of a compact manifold $\M$,
and let $\w$ be an $f$-invariant topological foliation of $\M$ with uniformly $C^{1+\b}$ leaves. 
Suppose that for some continuous Riemannian metrics on $\E=T\w$ and 
constants  $\gamma<1<\hat \gamma \,$ with $\; \hat \gamma\gamma ^{1+\beta}< 1$,
$$
\hat  \gamma^{-1} <\|Df_x(v)\| < \gamma \;\;\text{ for any $x\in X$ and any unit vector  $v\in \E_x$}.
$$
Then there exists a unique family $\{ \h_x \} _{x\in \M}$ of $C^{1+\b}$ diffeomorphisms  
$\,\h_x: \w_x \to \E_x$ satisfying $\h_x(x)=0$ and $D_x \h_x =\Id \,$ such that  for each $x \in \M$,
 \begin{equation}\label{lin}
 Df|\E_{x} =\h_{f(x)} \circ f \circ \h_x ^{-1}:\,\E_{x} \to \E_{f{(x)}} . 
\end{equation}
The maps $\,\h_x |_{B^\w(x,R)}$ depend continuously on $x \in \M$ in $C^1$ topology and have $\b$-H\"older derivative  with uniformly bounded H\"older constant. 
 \vskip.2cm

\end{lemma}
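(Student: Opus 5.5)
The statement just above is Lemma \ref{lin}, quoted from \cite[Corollary 4.8]{K24}; the paper takes it from there rather than proving it. If I had to prove it, I would use the standard nonstationary linearization scheme along the expanding direction.

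\emph{Reduction to a contracting fixed-point problem.} Work in exponential-type charts along the leaves. Identify $\w_x$ near $x$ with a neighborhood of $0$ in $\E_x$ by a $C^{1+\b}$ chart $e_x$ that depends continuously on $x$. In these coordinates $f$ becomes
$$F_x = Df|\E_x + r_x,$$
where $r_x(0)=0$, $D_0r_x=0$, and $Dr_x$ is $\b$-H\"older with uniform constant. Extend $r_x$ to all of $\E_x$ with small $C^{1}$ norm using a cutoff. We look for $\h_x$ of the form $(\Id + g_x)\circ e_x^{-1}$, with $g_x(0)=0$ and $D_0 g_x = 0$. The conjugacy equation then becomes
$$g_{fx} = Df_x\, g_x\circ F_x^{-1} + (\text{term in } r_x),$$
or the equivalent inverted form. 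The choice of direction depends on which way $\hat\gamma\gamma^{1+\b}<1$ contracts. Write $T$ for the resulting graph-transform operator on the space of sections $\{g_x\}$ with the seminorm
$$\sup_x \sup_{t} \|Dg_x(t)\|/|t|^{\b}.$$

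\emph{Contraction estimate.} Composing with $F_x^{\pm1}$ rescales distances by at most $\hat\gamma$. Multiplying by $Df$ contributes at most $\gamma$. The H\"older seminorm of the derivative picks up a factor $\hat\gamma^{\b}$. So $T$ contracts with rate about $\gamma\hat\gamma^{1+\b}$ or its mirrored form, and the hypothesis $\hat\gamma\gamma^{1+\b}<1$ (in the appropriate orientation) makes this rate less than $1$. Contraction gives a unique fixed point, and the fixed point inherits $C^{1+\b}$ regularity with a uniform H\"older constant. It also depends continuously on $x$ in the $C^1$ topology, because $T$ preserves the closed subspace of continuous families.

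\emph{Uniqueness and globalization.} Any other family satisfying the normalization is $C^{1+\b}$ with $D_x\h_x=\Id$, so its correction $g$ lies in the same space and must equal the fixed point. Finally, extend $\h_x$ from the local leaf to all of $\w_x$ by the equivariance:
$$\h_x = (Df^{n}_{f^{-n}x}|\E) \circ \h_{f^{-n}x}\circ f^{-n},$$
using contraction of $f^{-1}$ or $f$ along the leaf as appropriate. Uniqueness makes this consistent.

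\emph{Main obstacle.} The hard step is the H\"older estimate for the derivative in the contraction. One has to bound the difference quotient of $D(g\circ F^{-1})$ carefully, tracking both the nonlinearity of $F$ and the low regularity $C^{1+\b}$. This is exactly where the bunching condition $\hat\gamma\gamma^{1+\b}<1$ enters.
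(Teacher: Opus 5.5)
The paper gives no proof of this lemma; it is imported from [K24, Corollary 4.8], so your outline has to stand on its own. Your strategy is the standard one and can be made to work: write $\varphi_x=(\mathrm{Id}+g_x)\circ e_x^{-1}$ and obtain $g$ as the fixed point of a contraction in the seminorm $\sup_x\sup_t\|Dg_x(t)\|/|t|^\beta$. As written, however, you run the dynamics backwards, and your hedges (``or its mirrored form'', ``as appropriate'') leave the one decisive choice unmade.

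Here $\|Df|\mathcal{E}\|<\gamma<1$, so $f$ contracts $\mathcal{W}$. Write $A_x=Df|\mathcal{E}_x$ for the operator you display, $g_{fx}=A_x\,g_x\circ F_x^{-1}-r_x\circ F_x^{-1}$. It multiplies the seminorm by $\gamma\cdot\hat\gamma^{\beta}\cdot\hat\gamma=\gamma\hat\gamma^{1+\beta}$. This always exceeds $\hat\gamma\gamma^{1+\beta}$, by the factor $(\hat\gamma/\gamma)^\beta>1$. It is larger than $1$ in the near-conformal regime $\gamma\hat\gamma\approx 1$, which is exactly where the paper applies the lemma. There is no orientation in which the hypothesis controls it. The hypothesis controls only
\[
(Tg)_x=A_x^{-1}\bigl(g_{fx}\circ F_x+r_x\bigr),
\]
whose factor is $\hat\gamma\cdot\gamma^\beta\cdot\gamma$. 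Iterating this $T$ from $g=0$ is the same as taking $\varphi_x=\lim_n (A^n_x)^{-1}\circ F^n_x$ in charts.

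Your globalization formula has the same problem. Since $f^{-n}$ expands $\mathcal{W}$ and pushes points away from $f^{-n}x$, it cannot extend a locally defined $\varphi$. You must use $\varphi_x=(Df^n_x|\mathcal{E})^{-1}\circ\varphi_{f^nx}\circ f^n$, with $n$ large enough that $f^ny$ lies in the local ball. The same formula shows that $\varphi_x$ maps $\mathcal{W}_x$ onto $\mathcal{E}_x$. In this direction no cutoff is needed, because $F_x$ maps a small ball $B_\rho$ into itself.

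Two smaller points also need attention. First, your seminorm only controls $Dg_x$ near $0$. For a uniform $\beta$-H\"older constant of $D\varphi_x$ you need the additional estimate on $B_\rho$. Take $\gamma_1>\gamma$ close to $\gamma$ with $\|DF_x\|\le\gamma_1$ on $B_\rho$, and use $\|Dg\|_{C^0(B_\rho)}\le\rho^\beta[Dg]_\beta$. Then
\[
[D(Tg)]_\beta\le\hat\gamma\bigl(\gamma_1^{1+\beta}+\rho^\beta[DF]_\beta\bigr)[Dg]_\beta+\hat\gamma\,[Dr]_\beta ,
\]
which gives a $T$-invariant closed set for small $\rho$.

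Second, uniqueness holds only among families with uniformly bounded $C^{1+\beta}$ norms, or with some other uniformity in $x$. To see this, fix a nonperiodic orbit and set $\Phi_0(s)=(1+|s|^\beta)s$ and $\Phi_n=A^n_x\circ\Phi_0\circ(A^n_x)^{-1}$. Replacing $\varphi_{f^nx}$ by $\Phi_n\circ\varphi_{f^nx}$ along that orbit gives another normalized, equivariant family of $C^{1+\beta}$ diffeomorphisms, whose H\"older constants blow up along the orbit. Your step ``its correction $g$ lies in the same space'' uses exactly this uniformity, so you should state it as an assumption.
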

Since the cocycle $\A=Df|\E$ is continuously conjugate to $L|E$, its quasiconformal distortion 
$\|\A^n_x\| \cdot \|(\A^n_x)^{-1}\|$ grows at most polynomially. Hence we can choose
a continuous metric so that the  distortion estimate $\gamma \hat \gamma$ is arbitrarily close to $1$,
and hence the bunching assumption $\; \hat \gamma\gamma ^{1+\beta}< 1$ is satisfied for any given $\beta>0$.

Since $h(\w)=W$, the foliations  $\w$ and $\w^u$  integrate to the joint foliation 
$\V = h^{-1}(V)$, where $V$ is the linear foliation corresponding to the subspace $ E \oplus E^u$.
We will use holonomies of the foliation $\w^u$ inside $\V$ between the leaves of $\w$ and will
denote them 
$$\H_{x,y}: \w(x) \to \w(y) \;\text{ for any }y\in \w^u(x). 
$$
The maps $\H_{x,y}$ are globally defined, continuous, and depend continuously in $C^0$ topology on $x$ and $y$ when restricted to a ball of fixed size. We will now show that the maps $\H_{x,y}$ are 
differentiable and their derivatives coincide with the corresponding cocycle holonomies. 
For this we apply the following proposition to $f^{-1}$, so that $\gamma$ and  
$ \hat \gamma$ in it are the same as  for $f$ in Lemma \ref{lin}.  The second condition $\, \hat \gamma\gamma ^{1+\beta}< 1$ in \eqref{bunch} 
is the same as in Lemma \ref{lin} and ensures existence of the nonstationatry linearization.
The first condition $\hat \gamma \gamma\, \nu^{\beta(1-\b)}< 1$ in \eqref{bunch} is stronger than  $\hat \gamma \gamma\, \nu^{\beta}< 1$,
 which is the s-bunching  of $\E$ that ensures existence of s-holonomies of the
 $\b$-H\"older cocycle $\A$. Both conditions are satisfied since $\gamma \hat \gamma$ 
can be chosen arbitrarily close to $1$.

\begin{lemma}\label{smooth hol} {\em (Smoothness of stable holonomies)}\\
Let $f$ be a $C^{1+\b}$ diffeomorphism of a compact manifold  $\M$.
Let $\w$ and $\w^s$ be transverse $f$-invariant topological foliations of $\M$ with uniformly $C^{1+\b}$ leaves 
which integrate to a joint topological foliation $\V$. Suppose that there
are  continuous Riemannian metrics on $\E=T\w$ and  $\E^s=T\w^s$ and constants
$\nu$,  $\gamma$,  and $\hat\gamma $  such that 
\begin{equation}\label{rates}
\begin{aligned}
&\|Df_x(v^s)\| < \nu < 1 < \gamma^{-1} <\|Df_x(v)\| < \hat\gamma\\
&\text{for any $x \in X$ and any unit vectors  $v^s\in \E^s_x$ and $v\in \E_x$.}
\end{aligned}
\end{equation}
Suppose that $\E$ is $\beta$-H\"older and $f$ satisfies the bunching assumptions
\begin{equation}\label{bunch}
\hat \gamma \gamma\, \nu^{\beta/(1+\b)}< 1 \quad \text {and} \quad \hat \gamma\gamma ^{1+\beta}< 1.
\end{equation}
Then for any  $x \in X$ and  $y\in \w^s(x)$, the local $\w^s$ holonomy 
$$\text{$\H_{x,y}: \w(x) \to \w(y)$ is differentiable and $D_x \H_{x,y} =H_{x,y},$}
$$
 where
$H_{x,y}=H_{x,y}^{s,\A}$ is the $\w^s$ holonomy of the cocycle $\A=Df|\E$.
 \end{lemma}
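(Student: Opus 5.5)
We will use the nonstationary linearizations $\h_x:\w(x)\to\E_x$ of Lemma \ref{lin}, which exist by the second condition in \eqref{bunch}, to transfer the problem to linear maps. For $y\in\w^s_{loc}(x)$ define the linearized holonomy
$$
\tilde\H_{x,y}=\h_y\circ \H_{x,y}\circ \h_x^{-1}:\E_x\to\E_y .
$$
Since $D_x\h_x=\Id$ and $\h_x(x)=0$, it suffices to show that $\tilde\H_{x,y}$ is differentiable at $0$ with derivative $H_{x,y}$; moreover we aim to show that on a fixed ball, $\tilde\H_{x,y}$ is within $o(|u|)$ of the linear map $H_{x,y}$. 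Note that $\tilde \H_{x,y}(0)=\h_y(y)=0$ only after adjusting: $\H_{x,y}(x)=y$, so $\tilde\H_{x,y}(0)=0$.

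The key identity is equivariance. Since $\w,\w^s$ are $f$-invariant, $f\circ\H_{x,y}=\H_{fx,fy}\circ f$, and combining with \eqref{lin} we get
$$
\tilde\H_{x,y}=(\A^n_y)^{-1}\circ\tilde\H_{f^nx,f^ny}\circ \A^n_x \quad\text{for all } n\ge 0 .
$$
Compare with $H_{x,y}=\lim_n(\A^n_y)^{-1}\A^n_x$. Fix $u\in\E_x$ small and write
$$
\tilde\H_{x,y}(u)-(\A^n_y)^{-1}\A^n_x u=(\A^n_y)^{-1}\big(\tilde\H_{f^nx,f^ny}(\A^n_xu)-\A^n_xu\big),
$$
with fibers over nearby points identified as in \cite{KS13}. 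We choose $n$ depending on $|u|$ so that $\A^n_x u$ stays in a fixed ball where the linearizations are controlled: $|\A^n_xu|\le \gamma^{-n}|u|$... wait, on $\E$ vectors expand under $f$ (rates between $\gamma^{-1}$ and $\hat\gamma$), so $|\A_x^nu|\le\hat\gamma^n|u|$, and we take $n$ maximal with $\hat\gamma^n|u|\le r$. The needed estimate is a \emph{uniform nonlinear bound} for the linearized holonomy between $\nu^n$-close leaves: for $x',y'$ with $y'\in\w^s_{loc}(x')$ and $w$ in the $r$-ball,
$$
|\tilde\H_{x',y'}(w)-w|\le K\big(\dist(x',y')^{\beta}|w| + \dist(x',y')^{\beta/(1+\beta)}|w|^{\dots}\big),
$$
obtained from $C^0$ closeness of nearby $\w$-leaves (distance $\le K\dist(x',y')$ along $\w^s$, by uniform transversality and continuity of the joint foliation $\V$), $\beta$-H\"older dependence of $\E$ and of $D\h$, and interpolation between a $C^0$ bound $\lesssim\dist(x',y')$ and the $C^{1+\beta}$ bounds of $\h$, which is where the exponent $\beta/(1+\beta)$ enters. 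Applying this with $x'=f^nx$, $\dist\le\nu^n$, and multiplying by $\|(\A^n_y)^{-1}\|\le\gamma^n$, the error is bounded by roughly $\gamma^n\nu^{n\beta/(1+\beta)}\hat\gamma^n|u|\cdot(\text{small})$; the first bunching condition in \eqref{bunch} makes $(\hat\gamma\gamma\nu^{\beta/(1+\beta)})^n\to0$, and since $n\to\infty$ as $|u|\to0$ this gives $o(|u|)$. Meanwhile $\|(\A^n_y)^{-1}\A^n_x-H_{x,y}\|\to0$ by \eqref{close to Id}-type estimates (s-bunching, implied by \eqref{bunch}), so $\tilde\H_{x,y}(u)=H_{x,y}u+o(|u|)$.

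The main obstacle is the uniform estimate for $\tilde\H_{x',y'}-\Id$ on a fixed ball: the holonomy $\H$ is only continuous a priori, so one must extract a quantitative bound purely from geometry (leaves of $\w$ are $C^{1+\beta}$ graphs over $\E$ varying H\"older in $C^1$, and $\w^s$-holonomy moves points by $O(\dist)$). I would first express $\w(y')$ near $y'$ as a graph over $\w(x')$ in a chart, derive the $C^0$ bound, then use the $C^{1+\beta}$ regularity of $\h$ and an interpolation inequality to convert it into a relative bound with exponent $\beta/(1+\beta)$, matching the assumption.
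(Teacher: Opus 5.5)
Your setup matches the paper: the linearized holonomy $\tilde\H_{x,y}=\h_y\circ\H_{x,y}\circ\h_x^{-1}$, the equivariance $\tilde\H_{x,y}=(\A^n_y)^{-1}\tilde\H_{x_n,y_n}\A^n_x$, pulling errors back with $\|(\A^n_y)^{-1}\|\le\gamma^n$, and using $(\A^n_y)^{-1}\A^n_x\to H_{x,y}$. But there is a genuine gap at the step you flag yourself, and it comes from your choice of $n$.

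Taking $n$ maximal with $\hat\gamma^n|u|\le r$ keeps $w=\A^n_xu$ at a scale that does not shrink; in the worst case $|w|$ is comparable to $r$. On a ball of fixed size, the only quantitative information you have is:
\begin{itemize}
\item the Taylor bounds $|\h^{\pm1}_{x'}(w)-w|\le M|w|^{1+\beta}$ at the base points, coming from $\h_{x'}(x')=0$ and $D_{x'}\h_{x'}=\Id$;
\item the displacement of the $\w^s$-holonomy;
\item the H\"older identification of fibers.
\end{itemize}
Together these give $|\tilde\H_{x',y'}(w)-w|\lesssim |w|^{1+\beta}+\dist(x',y')+\dist(x',y')^\beta|w|$. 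At $|w|\approx r$ the term $r^{1+\beta}$ does not go to $0$ with $\dist(x',y')$. Pulling back by $\gamma^n$ and dividing by $|u|\approx r\hat\gamma^{-n}$, the bound you can prove is of relative size $(\gamma\hat\gamma)^n r^\beta$, which does not tend to $0$ since $\gamma\hat\gamma>1$.

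Your proposed remedy, interpolating between a $C^0$ bound and $C^{1+\beta}$ bounds, has nothing to act on. The difference $\tilde\H_{x',y'}-\Id$ contains the holonomy $\H$, which is only continuous, so it has no $C^{1+\beta}$ bound. The linearization lemma gives only continuous dependence of $\h_{x'}$ on $x'$, with no rate, so you cannot compare $\h_{y'}$ with $\h_{x'}$ on a fixed ball at a rate either. Nor does a displacement bound $\lesssim\dist(x',y')$ follow from transversality and continuity of $\V$: holonomies of a topological foliation have no a priori modulus of continuity.

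The missing idea is to let the working scale shrink and balance two errors.
\begin{itemize}
\item Get the displacement from the dynamics. With $z_n=f^n(\h_x^{-1}u)=\h_{x_n}^{-1}(\A^n_xu)$, the point $\H_{x_n,y_n}(z_n)=f^n(\H_{x,y}(\h_x^{-1}u))$ lies within $\nu^n\e_0$ of $z_n$.
\item Use only the Taylor bounds at $x_n$ and $y_n$. This gives $|\tilde\H_{x_n,y_n}(\A^n_xu)-\A^n_xu|\lesssim(\hat\gamma^n|u|)^{1+\beta}+\nu^n\e_0$, plus a term $\dist(x_n,y_n)^\beta\hat\gamma^n|u|$. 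That term is handled by comparing with $H_{x_n,y_n}$ and using $\gamma\hat\gamma\nu^\beta<1$.
\item Choose $n=n(u)$ as the first time $\nu^n\e_0<M(\hat\gamma^n|u|)^{1+\beta}$. Then $\hat\gamma^n|u|\approx\nu^{n/(1+\beta)}\to0$, and both errors are $\lesssim(\hat\gamma^n|u|)^{1+\beta}$.
\item Pulling back gives $\gamma^n(\hat\gamma^n|u|)^{1+\beta}\lesssim(\gamma\hat\gamma\,\nu^{\beta/(1+\beta)})^n|u|$.
\end{itemize}
This balancing, not an interpolation on a fixed ball, is where the exponent $\beta/(1+\beta)$ and the first bunching condition enter. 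Put differently, the relative bound $|\tilde\H(w)-w|\lesssim\dist^{\beta/(1+\beta)}|w|$ you want holds only at the crossover scale $|w|\approx\dist^{1/(1+\beta)}$. That is the scale at which the iteration must stop.
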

 
  If $f$ and the leaves of $\V$ were $C^{2+\b}$, this lemma could be obtained using 
 the $C^r$ Section Theorem. This result seems new even for $C^{1+\b}$ Anosov diffeomorphisms,
  yielding $C^{1+\b}$ regularity of the stable holonomies and the stable foliation if 
   $f$ is close to conformal on $\E^u$, or if $\E^u$ is one dimensional. 

\vskip.2cm 
 
\noindent {\it Proof of Lemma \ref{smooth hol}.}
The first bunching assumption in \eqref{bunch} yields that 
\begin{equation}\label{bu}
\hat\gamma\gamma\, \nu^\beta < \hat\gamma\gamma \,\nu^{\beta/(1+\b)}<\theta 
\quad \text{for some} \; \theta <1.
\end{equation}
 In particular, the cocycle $\A=Df|\E$ is s-bunched and hence it has $\beta$-H\"older s-holonomies
 $H_{x,y}=H_{x,y}^{s,\A}:\E_x \to \E_y$. The second bunching assumption in \eqref{bunch} 
 yields existence of nonstationary linearizations $\{\h_x\}_{x\in X}$ of $f$ along $\w$ by Lemma \ref{lin}.
\vskip.1cm  
We fix a small $0<\e_0<1$ so that  for any $x \in X$ and $y\in \w^s(x)$  with $d_{\w^s}(x,y)< \e_0/2$
the holonomy $\H_{x,y}$ of $\w^s$ is defined and satisfies $d_{\w^s}(z,\H_{x,y}(z))<\e_0$ on the ball 
of radius $\e_0$ in $\w(x)$ centered at $x$. We fix such $x$ and $y$ and 
 lift the holonomy $\H_{x,y}$ to $\E_x=T_{x}\w$ using the nonstationatry linearizations: 
 $$\bar \H_{x,y} =   \h_{f(x)} \circ \H_{x,y} \circ \h_x ^{-1}:\,\E_{x} \to \E_{f{(x)}}.
 $$
 Since $D_x \h_x =\Id=D_y \h_y  \,$ it suffices to show that $D_x \bar \H_{x,y} =H_{x,y}$. 
 We will prove that 
 $$\Delta (t)= \bar \H_{x,y} (t) - H_{x,y}(t)=o(t), \;\text{ where $t\in \E_x$.}
 $$
 We iterate forward and denote $x_n=f^n(x)$,  $y_n=f^n(y)$, and 
 $$\Delta_n = \bar \H_{x_n,y_n} - H_{x_n,y_n}.
 $$ 
 Using invariance properties of holonomies \eqref{H equiv} and linearizations \eqref{lin}, and denoting
$t_n=\A^n_x(t)$, we can write 
$$
 \Delta (t) =  (\A_y^n)^{-1} \circ \Delta_n \circ  \A_x^n (t)= (\A_y^n)^{-1} \Delta_n  (t_n). 
$$
Using \eqref{rates} we obtain
$$\| t_n\| \le \hat \gamma^n \|t \| \quad \text{ and } \quad \|\Delta (t)\| \le  \gamma^n \| \Delta_n  (t_n)  \|.$$
\vskip.05cm
Now we estimate $\Delta$.
\begin{equation}\label{Delta(t)}
\| \Delta (t)\| \,\le\, \gamma^n \| \Delta_n (t_n)\| \,\le\,  \gamma^n\| H_{x_n,y_n} (t_n) -  t_n\| + \gamma^n\| \bar \H_{x_n,y_n} (t_n) -  t_n\|.
\end{equation}
For the first term we use the H\"older property of cocycle holonomies \eqref{H holder}.
$$
\| H_{x_n,y_n} (t_n) -  t_n\| \,\le\, \| H_{x_n,y_n}  - \Id\| \,\|  t_n\| 
\,\le\,  Kd(x_n,y_n)^\beta \, \hat\gamma^n\, \|t \|
\,\le\, K  \e_0^\beta \nu^{n \beta }\, \hat \gamma^n \,\|t \|,
$$
and so using \eqref{bu} we obtain
\begin{equation}\label{Delta(t)2}
\gamma^n\| H_{x_n,y_n} (t_n) -  t_n\| \,\le\, \
K \gamma^n \hat \gamma^n\,\nu^{n \beta }\, \|t \| \,<\, K\theta^n \|t\|.
\end{equation}

\vskip.1cm

Now we estimate the second term in \eqref{Delta(t)}.  Denoting 
$$
z_n=\h_{x_n} ^{-1}(t_n) \in \w(x_n)\quad\text{and}\quad w_n =\H_{x_n,y_n}(z_n)\in \w(y_n)
$$ 
we have 
$$\bar \H_{x_n,y_n} (t_n) =\h_{y_n} \circ \H_{x_n,y_n} \circ \h_{x_n} ^{-1}(t_n) = \h_{y_n} (w_n).
$$ 
Using local coordinates we identify a small ball around 
$x_n$ with a ball in $\R^d$, where $x_n$ is identified with 0.
Then
\begin{equation} \label{H-t}
\| \bar \H_{x_n,y_n} (t_n) -  t_n\| \le \|t_n-z_n\| +\|z_n-w_n\|+\|w_n- \h_{y_n} (w_n)\|.
\end{equation}
For 
the first term we have
$$\|t_n-z_n\| =\|t_n-\h_{x_n} ^{-1}(t_n)\|\le M \| t_n\|^{1+\beta}\le M(\hat \gamma^n \|t \|)^{1+\beta}$$ 
since  $\h_x(x)=0$, $D_x \h_x =\Id=D_0\h_{x} ^{-1} $, and $C^{1+\beta}$ norms of $\h_{x_n}$ and $\h_{x_n}^{-1}$ on small balls are uniformly bounded. 
The middle term in \eqref{H-t} is estimated by contraction of $\w^s$ as 
$$\|z_n-w_n\|\le  \nu^{ n}\e_0.$$ 
The middle term bound decays while the first
 term bound grows with $n$, and we choose $n=n(t) \in \N$ to be the first
for which 
$$ \nu^{ n}\e_0 < M(\hat \gamma^n \|t \|)^{1+\beta}.
$$ 
For any sufficiently small $t$, this choice ensures that $n=n(t)$ is large and hence both that $\nu^{ n}\e_0$ and   $M(\hat \gamma^n \|t \|)^{1+\beta}$ are small. This also yields that $\hat \gamma^n \|t \|$ is small,  and so we can assume that 
$M(\hat \gamma^n \|t \|)^{1+\beta} \le \hat \gamma^n \|t \|.$
Then we have
$$
\begin{aligned}
\|y_n -w_n\| &\le \|y_n-0\|+ \|0-t_n\| +\|t_n-z_n\| +\|z_n-w_n\|\\
&\le \nu^{ n}\e_0 + \hat \gamma^n \|t \| +  
M(\hat \gamma^n \|t \|)^{1+\beta}+\nu^{ n}\e_0 \,\le\,  4\hat \gamma^n \|t \|.
\end{aligned}
$$
Now the third term in \eqref{H-t} can be estimated similarly to first one by 
 $$
 \|w_n- \h_{y_n} (w_n)\| \le M(\|w_n -y_n\|)^{1+\beta} \le M(4\hat \gamma^n \|t \|)^{1+\beta} \le 16M(\hat \gamma^n \|t \|)^{1+\beta}.
 $$
 We conclude that 
$$ \| \bar \H_{x_n,y_n} (t_n) -  t_n\| \le 18M(\hat \gamma^n \|t \|)^{1+\beta} .
$$

Using this and  \eqref{Delta(t)2} in   \eqref{Delta(t)} we obtain
$$
\| \Delta (t) \|\le 
 K\theta^n \|t\|  + \gamma^n 18M(\hat \gamma^n \|t \|)^{1+\beta},
$$
and hence
\begin{equation} \label{Delta(t)3}
\| \Delta (t) \| \,\|t \|^{-1}\le K\theta^n + 18M(\gamma\hat \gamma^{1+\b})^n \|t \|^{\beta}.  
\end{equation}
From the choice of $n=n(t)$ we have the following exponential estimate for $\|t\|$
$$ \nu^{ n}  \sim (\hat \gamma^n \|t \|)^{1+\beta} \iff  \|t \| \sim (\nu^{1/(1+\beta)} /\hat \gamma)^{n}.
$$
Hence the last term in \eqref{Delta(t)3}  can be bounded using \eqref{bu} as
$$ 18M(\gamma\hat \gamma^{1+\b})^n \|t \|^{\beta} \le M' (\gamma\hat \gamma^{1+\b})^n \cdot (\nu^{1/(1+\beta)} /\hat \gamma)^{\beta n}=
M' (\gamma\hat \gamma\, \nu^{\b/(1+\beta)})^n < M' \theta^n.
$$
We conclude that for $n=n(t)$,
$$\| \Delta (t) \| \,\|t \|^{-1}< (K+M') \theta^n.
$$
 Hence for any $\e>0$ 
there is $\delta>0$ such that $\|t\|<\delta$ implies that $n=n(t)$ is large enough and $(K+M') \theta^n<\e$.
This proves that $\| \Delta (t) \| =o(t)$ and completes the proof of the Lemma \ref{smooth hol}.
$\QED$

\vskip.3cm

We recall that $C$ is a $\beta$-H\"older conjugacy  between $\beta$ fiber bunched cocycles $\A=Df|\E$  and $L|E$. By \cite[Proposition 4.5]{S15}, $C$ 
 intertwines the  u-holonomy $H_{x,y}^{u,\A}:\E_x \to \E_y$ of $\A$ with the 
 trivial  holonomy $\Id :E_x \to E_y$ of the constant cocycle $L|E$,  that is, 
 \begin{equation} \label{inter}
 H_{x,y}^{u,\A}=C(y)\circ \Id\circ C(x)^{-1}, \quad\text{ and so }\quad H_{x,y}^{u,\A}\circ C(x) =C(y). 
\end{equation}
 
 We obtain a  $\beta$-H\"older Riemannian metric $g$ on the bundle $\E=T\w$ by pushing a constant Euclidean metric on $E$ by $C$. It is clear from the formula above that  the holonomy $H_{x,y}^{u,\A} $  is an isometry with respect to $g$.

We equip the leaves of $\w$ with the Riemannian metric $g$. Now Proposition \ref{smooth hol}, 
applied to $\w^u$ and $\w$ with $f^{-1}$, yields that 
the derivative $D_x \H_{x,y}$ of the holonomy $\H_{x,y}: \w(x) \to \w(y)$ of $\w^u$ coincides 
with $H_{x,y}^{u,\A}$, and thus it is  an isometry with respect to $g$. 
We conclude that $\H_{x,y}: (\w(x),g) \to (\w(y),g)$ is an isometry.

We fix an arbitrary $x\in \T^d$ and take any $z\in \w(x)$. Since the linear unstable leaf $W^{u}(x)$ is dense in $\T^d$, there exists a sequence of vectors $v_n\in E^{u}\subset\R^d$ such that $h(x)+v_n \in\T^d$ converges to $h(z)$. Denoting 
$y_n=h^{-1}(h(x)+v_n)$ we obtain a sequence of points $y_n\in \w^{u}(x)$ converging to $z$.
The linear holonomies $ \hat \H_{h(x),\,h(y_n)}$ of $W^u$ are given by translations by $v_n$, and  
hence they converge in $C^0$ to the translation $T_v$ in $W (h(x))$ by the vector $v=h(z)-h(x)$. Hence
the holonomies $\H_{x,y_n}$ also converge in $C^0$ norm to the corresponding homeomorphism 
$$
\t_{v}: \w (x) \to \w (x) \;\text{ given by }\;  \t_{v}=h^{-1} \circ T_{v} \circ h.
$$ 
Since the  holonomies $\H_{x,y_n}$ are isometries between $ \w (x)$ and $\w (y_n)$, the limit
$\t_{v}$ is an isometry of $( \w (x),g)$. Thus $h_x=h|\w(x)$ conjugates the action of $E =\R^k$ by translations of $W (h(x))$ 
with the corresponding continuous action of $\R^k$ by isometries $\t_v$ of  $ \w(x)$.
Denoting the group of  isometries of  $\w(x)$ by $G_x$ we obtain
an injective  continuous homomorphism 
 $$
 \eta_x : E  \to  G_x \quad\text{given by }\; 
 \eta_x (v)= \t_v =( h_x)^{-1} \circ T_v \circ  h_x.
 $$ 
Since the Riemannian metric $g$ is $\beta$-H\"older, the elements of $G_x$ are $C^{1+\beta}$ diffeomorphisms \cite{T}.  Classical results imply that $G_x$  is a finite dimensional Lie group. 

In our case this can be seen directly as follows. We claim that the $C^{1+\beta}$ nonstationary 
linearization $\h_x: (\w(x),g) \to (\E_x,g_x) $ is an isometry, giving a natural $C^{1+\beta}$ 
identification of $G_x$ with the Euclidean group of $\E_x$.
Indeed, it is easy to check that the family of maps
$\tilde H_{z,x}= D_z\h_x : \E_z \to \E_x=T_{\h(z)}\E_x$ is a  $\beta$-H\"older holonomy 
for  cocycle $\A=Df|\E$ along the foliation $\w$.  As in \eqref{inter} above, by \cite[Proposition 4.5]{S15} 
the conjugacy $C$ intertwines this (unique) holonomy for $\A$ with the trivial holonomy
 for $L|E$ , and hence $ D_z\h_x=\tilde H_{z,x}:(\E_z,g_z) \to (\E_x, g_x)$ is an isometry for each $z\in \w(x)$. 

Any continuous homomorphism between Lie groups is  a $C^\infty$  Lie group homomorphism, see
for example \cite[Corollary 3.50]{Ha}. Hence $\eta_x$ is  a $C^\infty$ diffeomorphism onto 
its image in $G_x$.  
Since $( h_x)^{-1}$  is  determined by  $\eta_x$ as 
 $$
( h_x)^{-1}(h(x)+v)=  \t_v (0)=\eta_x (v)(0),
 $$
 we obtain that $( h_x)^{-1}$ is a $C^{1+\beta}$ diffeomorphism between $W (h(x))$ and 
  $\w (x)$. 
Hence $h_x=h|\w(x)$ is also  a $C^{1+\beta}$ diffeomorphism. Since 
the homomorphisms $\eta_x$ depend continuously on $x$, so do their derivatives,
which yields that $h$ is uniformly $C^{1}$ along $\w$. Now the derivative 
$\tilde C (x)=D_x(h_x)$ is a continuous conjugacy between cocycles  $\A$ and $L|E$
and hence is $\b$-H\"older on $\T^d$ by \cite[Theorem 2.1] {KSW23}. This shows that $h$ is
uniformly $C^{1+\beta}$ along $\w$ and completes the proof of
Proposition \ref{smooth along W}. $\QED$


\end{document}